\newcommand{\cA}{\ensuremath{\mathcal{A}}}%
\newcommand{\cB}{\ensuremath{\mathcal{B}}}%
\newcommand{\cE}{\ensuremath{\mathcal{E}}}%
\newcommand{\cK}{\ensuremath{\mathcal{K}}}%
\newcommand{\cL}{\ensuremath{\mathcal{L}}}%
\newcommand{\cM}{\ensuremath{\mathcal{M}}}%
\newcommand{\bbC}{\ensuremath{\mathbb{C}}}%
\newcommand{\bbE}{\ensuremath{\mathbb{E}}}%
\newcommand{\bbP}{\ensuremath{\mathbb{P}}}%
\newcommand{\bbR}{\ensuremath{\mathbb{R}}}%
\newcommand{\bOne}{\ensuremath{\mathbf{1}}}%
\newcommand{\TV}{\textsc{tv}}%
\theoremstyle{plain}%
\newtheorem{theorem}{Theorem}%
\newtheorem{lemma}[theorem]{Lemma}%
\newtheorem{proposition}[theorem]{Proposition}%
\theoremstyle{definition}%
\newtheorem{definition}[theorem]{Definition}%
\newtheorem{assumption}[theorem]{Assumption}%
\theoremstyle{remark}%
\newtheorem*{remark}{Remark}%
\numberwithin{equation}{section}%
\numberwithin{theorem}{section}%
\title[Scalar conservation laws with Markov initial conditions]%
{Scalar conservation laws with monotone pure-jump Markov initial
  conditions}
\author[Kaspar]{David C. Kaspar}%
\address{%
  Division of Applied Mathematics \\
  Box F \\
  Brown University \\
  Providence, RI 02912}%
\email{david\_kaspar@brown.edu}%
\thanks{DK was supported by NSF DMS-1106526 and the Robinson
  Fellowship as the research reported herein was conducted, and NSF
  DMS-1148284 as this article was written.}
\author[Rezakhanlou]{Fraydoun Rezakhanlou}%
\address{%
  Department of Mathematics \\
  University of California \\
  Berkeley, CA 94720-3840}%
\email{rezakhan@math.berkeley.edu}%
\thanks{FR is supported by NSF grant DMS-1407723}
\date{\today}
\keywords{Scalar conservation laws, random initial data, Markov jump
  processes}
\subjclass[2010]{Primary 60K35; Secondary 35L65, 60J25, 60J75}
\begin{document}

\maketitle

\begin{abstract}
  In 2010 Menon and Srinivasan published a conjecture for the
  statistical structure of solutions $\rho$ to scalar conservation
  laws with certain Markov initial conditions, proposing a kinetic
  equation that should suffice to describe $\rho(x,t)$ as a stochastic
  process in $x$ with $t$ fixed.  In this article we verify an
  analogue of the conjecture for initial conditions which are bounded,
  monotone, and piecewise constant.  Our argument uses a particle
  system representation of $\rho(x,t)$ over $0 \leq x \leq L$ for $L >
  0$, with a suitable random boundary condition at $x = L$.
\end{abstract}



\section{Introduction}
\label{sec:intro}

We are interested in the statistical properties of solutions to the
Cauchy problem for the scalar conservation law
\begin{equation}
  \label{eqn:scl}
  \left\{
    \begin{aligned}
      \rho_t &= H(\rho)_x && \text{in } \bbR \times (0,\infty) \\
      \rho &= \xi  && \text{on } \bbR \times \{t = 0\}
    \end{aligned}
  \right.
\end{equation}
where $\xi = \xi(x)$ is a stochastic process.  In the special case 
\begin{equation}
  \label{eqn:quadraticH}
  H(p) = - \frac{p^2}{2}
\end{equation}
\eqref{eqn:scl} is the well-known inviscid Burgers' equation, which
has often been considered with random initial data.  Burgers himself,
in his investigation of turbulence \cite{Burgers48}, was already
moving in this direction, though it would be some time before problems
in this area were addressed rigorously.

Several papers, including \cite{Carraro94, Carraro98, Chabanol04,
  Sinai92, Bertoin98, Bertoin00}, have developed a theory showing a
kind of integrability for the evolution of the \emph{law} of
$\rho(\cdot,t)$ for certain initial data.  This article is concerned
with pushing stochastic integrability results beyond the Burgers
setting to general scalar conservation laws, an effort begun in
earnest by Menon and Srinivasan \cite{Menon10,Menon12}.  Before
stating our main result, we first survey the major developments in
this subject.

This work is adapted from \cite{Kaspar14}.

\subsection{Background}
\label{sub:background}

The Burgers case $H(p) = - p^2/2$ has seen extensive interest.  Recall
that if
\begin{equation}
  u(x,t) = \int_{-\infty}^x \rho(x', t) \, dx'
\end{equation}
then $u(x,t)$ formally satisfies 
\begin{equation}
  \label{eqn:HJ}
  u_t + \frac{1}{2}(u_x)^2 = 0
\end{equation}
and is determined by the Hopf-Lax formula \cite{Hopf50}:
\begin{equation}
  \label{eqn:hopf-lax-burgers}
  u(x,t) = \inf_y \left\{\int_{-\infty}^y \xi(x') \, dx' +
    \frac{(x-y)^2}{2t}\right\}.
\end{equation}
The backward Lagrangian $y(x,t)$ is the rightmost minimizer in
\eqref{eqn:hopf-lax-burgers}, and it is standard \cite{Evans10} that
this determines both the viscosity solution $u(x,t)$ to \eqref{eqn:HJ}
and the entropy solution to $\rho_t + \rho \rho_x = 0$, 
\begin{equation}
  \label{eqn:rho-from-y}
  \rho(x,t) = \frac{x - y(x,t)}{t}.
\end{equation}

Groeneboom, in a paper \cite{Groeneboom89} concerned not with PDE but
concave majorants\footnote{Straightforward manipulations of
  \eqref{eqn:hopf-lax-burgers} relate this to a Legendre transform.}
and isotonic estimators, determined the statistics of $y(x,t)$ for the
case where $\xi(x)$ is white noise, which we understand to mean that
the initial condition for $u(x,t)$ in \eqref{eqn:HJ} is a standard
Brownian motion $B(x)$.  Among the results of \cite{Groeneboom89} is a
density for 
\begin{equation}
  \sup \{x \in \bbR : B(x) - cx^2 \text{ is maximal}\}
\end{equation}
expressed in terms of the Airy function.  It has been observed (see
e.g.\ \cite{Majumdar05}) that the Airy function seems to arise in a
surprising number of seemingly unrelated stochastic problems, and it
is notable that Burgers' equation falls in this class.

Several key papers appeared in the 1990s; we rely on the introduction
of Chabanol and Duchon's 2004 paper \cite{Chabanol04}, which recounts
some of this history, and discuss those portions most relevant for our
present purposes.  In 1992, Sinai \cite{Sinai92} explicitly connected
Burgers' equation with white noise initial data to convex minorants of
Brownian motion.  Three years later Avellaneda and E
\cite{AvellanedaE95} showed for the same initial data that the
solution $\rho(x,t)$ is a Markov process in $x$ for each fixed $t >
0$.

Carraro and Duchon's 1994 paper \cite{Carraro94} defined a notion of
\emph{statistical solution} to Burgers' equation.  This is a
time-indexed family of probability measures $\nu(t, \cdot)$ on a
function space, and the definition of solution is stated in terms of
the characteristic functional
\begin{equation}
  \label{eqn:char-func}
  \hat{\nu}(t,\phi) 
  = \int \exp \left(i \int \rho(x) \phi(x) \, dx \right) \nu(t,d\rho)
\end{equation}
for test functions $\phi$.  Namely, $\hat{\nu}(t,\phi)$ must satisfy
for each $\phi$ a differential equation obtained by formal
differentiation under the assumption that $\rho$ distributed according
to $\nu(t,d\rho)$ solves Burgers' equation.  This statistical solution
approach was further developed in 1998 by the same authors
\cite{Carraro98} and by Chabanol and Duchon \cite{Chabanol04}.  It
does have a drawback: given a (random) entropy solution $\rho(x,t)$ to
the inviscid Burgers' equation, the law of $\rho(\cdot, t)$ is a
statistical solution, but it is not clear that a statistical solution
yields a entropy solution, and at least one example is known
\cite{Bertoin98} when these notions differ.  Nonetheless,
\cite{Carraro94,Carraro98} realized that it was natural to consider
L\'evy process initial data, which set the stage for the next
development.

In 1998, Bertoin \cite{Bertoin98} proved a remarkable closure theorem
for L\'evy initial data.  We quote this, with adjustments to match our
notation.
\begin{theorem}[{\cite[Theorem 2]{Bertoin98}}]
  \label{thm:bertoin}
  Consider Burgers' equation with initial data $\xi(x)$ which is a
  L\'evy process without positive jumps for $x \geq 0$, and $\xi(x) =
  0$ for $x < 0$.  Assume that the expected value of $\xi(1)$ is
  positive, $\bbE \xi(1) \geq 0$.  Then, for each fixed $t > 0$, the
  backward Lagrangian $y(x,t)$ has the property that
  \begin{equation}
    y(x,t) - y(0,t)
  \end{equation}
  is independent of $y(0,t)$ and is in the parameter $x$ a
  subordinator, i.e.\ a nondecreasing L\'evy process.  Its distribution
  is the same as that of the \emph{first} passage process
  \begin{equation}
    x \mapsto \inf \{z \geq 0 : t \xi(z) + z > x\}.
  \end{equation}
  Further, denoting by $\psi(s)$ and $\Theta(t,s)$ $(s \geq 0)$ the
  Laplace exponents of $\xi(x)$ and $y(x,t) - y(0,t)$,
  \begin{align}
    \label{eqn:initial-exponent}
    \bbE \exp(s \xi(x)) &= \exp(x \psi(s)) \\
    \label{eqn:laplace-exponent}
    \bbE \exp[s(y(x,t) - y(0,t))] &= \exp(x \Theta(t,s)),
  \end{align}
  we have the functional identity
  \begin{equation}
    \label{eqn:laplaceidentity}
    \psi(t\Theta(t,s)) + \Theta(t,s) = s.
  \end{equation}
\end{theorem}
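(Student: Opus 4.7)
The plan is to reduce this theorem to two classical pieces of L\'evy-process theory, using the Hopf-Lax representation of $y(x,t)$ as the bridge. Introduce the auxiliary process $S(z) := t\xi(z) + z$ for $z \geq 0$. Since $\xi$ has no positive jumps, neither does $S$, and the hypothesis $\bbE \xi(1) \geq 0$ gives $\bbE S(1) \geq 1 > 0$, so $S(z) \to +\infty$ almost surely. A short computation gives the Laplace exponent of $S$ as $\tilde{\psi}(s) = s + \psi(ts)$. Classical theory for spectrally negative L\'evy processes (e.g.\ Bertoin's monograph \emph{L\'evy Processes}, Ch.~VII) then asserts that the first-passage process $\tau(x) := \inf\{z \geq 0 : S(z) > x\}$ is a subordinator in $x$ whose Laplace exponent $\Theta(t,\cdot)$ is the right inverse of $\tilde{\psi}$, i.e.\ $\tilde{\psi}(\Theta(t,s)) = s$. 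Unpacking the formula for $\tilde{\psi}$ yields \eqref{eqn:laplaceidentity} directly.

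What remains is the distributional identity $y(x,t) - y(0,t) \overset{d}{=} \tau(x)$ as processes in $x$. From the Hopf-Lax formula, $y(x,t)$ is the rightmost minimizer of $\Phi(y) = \int_0^y \xi(u)\,du + (x-y)^2/(2t)$ on $y \geq 0$; differentiating gives $t\Phi'(y) = S(y) - x$, so the candidate minimizers sit at level crossings of $S$ through $x$. I would then show that $x \mapsto y(x,t) - y(0,t)$ has stationary, independent increments---hence is a subordinator---by combining the Markov property of $\xi$ with the fact that the Hopf-Lax minimization on an interval $[y(x_1,t), y(x_2,t)]$ depends only on the increments of $\xi$ over that interval, together with the monotonicity $y(x_1,t) \leq y(x_2,t)$ for $x_1 \leq x_2$. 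A direct Laplace-transform computation then matches its exponent with $\Theta(t,\cdot)$.

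The main obstacle is that the rightmost global minimizer of $\Phi$ is pathwise a \emph{last-exit} quantity---the last upcrossing of $S$ through $x$ at which the running minimum of $G(y) := \int_0^y (S(u) - x)\,du$ is achieved---while $\tau(x)$ is a \emph{first entry}: after $\tau(x)$, the integrated process $G$ can strictly decrease during downward excursions of $S$ below $x$, so pathwise $y(x,t) \neq \tau(x)$ in general. Closing this gap requires either invoking a time-reversal duality for spectrally negative L\'evy processes (which converts the last exit of $S$ through $x$ into a first passage for the dual, spectrally positive, process sharing the Laplace exponent) or bypassing the pathwise picture with a direct Laplace-transform calculation of $\bbE \exp[s(y(x,t) - y(0,t))]$ built from the Hopf-Lax formula and the independent increments of $\xi$. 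This distributional-versus-pathwise bridging, not the exponent identity itself, is where the substantive work lies.
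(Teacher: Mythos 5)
The paper does not actually prove this result; it is quoted, with only notational adjustments, as Theorem~2 of Bertoin's 1998 paper, and the only methodological remark the paper offers is that Bertoin's argument (like Menon--Srinivasan's generalization) relies on Getoor's notion of \emph{splitting times} to obtain the requisite independence. With that caveat, your opening moves are the right ones: introducing $S(z) = t\xi(z) + z$, noting it is spectrally negative and drifts to $+\infty$, computing $\tilde\psi(s) = s + \psi(ts)$, and observing that the inverse relation for the first-passage subordinator reproduces \eqref{eqn:laplaceidentity} is exactly the correct algebra. You are also right to flag that the remaining distributional identity between $y(\cdot,t)-y(0,t)$ and the first-passage process of $S$ is where the substantive work lies.

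However, the sketch does not close that gap, and one of its intermediate claims is not sound as written. The assertion that ``the Hopf-Lax minimization on an interval $[y(x_1,t),y(x_2,t)]$ depends only on the increments of $\xi$ over that interval'' is only usable once one already knows that conditioning on the value of $y(x_1,t)$ --- which is a rightmost \emph{global} minimizer and therefore constrains the whole path of $S$, before and after --- leaves the post-$y(x_1,t)$ increments of $S$ independent of the pre-$y(x_1,t)$ ones with the correct conditional law. That is precisely what a splitting-time argument supplies: the rightmost minimizer is not a stopping time, but it is a splitting (last-exit type) time, and Getoor's theory yields the conditional independence and the law of the post-minimum path; spectral negativity then guarantees creeping (no overshoot) at each level, so that the post-minimum first-passage process has the unconditioned subordinator law and Laplace exponent. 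Your two proposed routes --- time-reversal duality or a direct Laplace-transform computation --- are plausible in spirit but not executed, and the independence sketch as written presupposes the very conclusion the splitting-time machinery is needed to establish. In short: the location of the gap is identified correctly, but the key idea (splitting times) that resolves it is absent, and the intermediate independence reasoning is circular without it.
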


\begin{remark}
  The requirement $\bbE \xi(1) \geq 0$ can be relaxed, with minor
  modifications to the theorem, in light of the following elementary
  fact.  Suppose that $\xi^1(x)$ and $\xi^2(x)$ are two different
  initial conditions for Burgers' equation, which are related by
  $\xi^2(x) = \xi^1(x) + cx$.  It is easily checked using
  \eqref{eqn:hopf-lax-burgers} that the corresponding solutions
  $\rho^1(x,t)$ and $\rho^2(x,t)$ are related for $t > 0$ by
  \begin{equation}
    \label{eqn:rho-tilted}
    \rho^2(x,t) = \frac{1}{1+ct}
    \left[\rho^1\left(\frac{x}{1+ct},\frac{t}{1+ct}\right) + cx\right].
  \end{equation}
  This observation is found in a paper of Menon and Pego
  \cite{Menon07}, but (as this is elementary) it may have been known
  previously.  Using \eqref{eqn:rho-tilted} we can adjust a
  statistical description for a case where $\bbE \xi(1) \geq 0$ to
  cover the case of a L\'evy process with general mean drift.
\end{remark}

We find Theorem~\ref{thm:bertoin} remarkable for several reasons.
First, in light of \eqref{eqn:rho-from-y}, it follows immediately that
the solution $\rho(x,t) - \rho(0,t)$ is for each fixed $t$ a L\'evy
process in the parameter $x$, and we have an example of an
infinite-dimensional, nonlinear dynamical system (the PDE, Burgers'
equation) which preserves the independence and homogeneity properties
of its random initial configuration.  Second, the distributional
characterization of $y(x,t)$ is that of a \emph{first} passage
process, where the definition of $y(x,t)$ following
\eqref{eqn:hopf-lax-burgers} is that of a \emph{last} passage process.
Third, \eqref{eqn:laplaceidentity} can be used to show
\cite{MenonMiracle12} that if $\psi(t,q)$ is the Laplace exponent of
$\rho(x,t) - \rho(0,t)$, then
\begin{equation}
  \label{eqn:laplaceburgers}
  \psi_t + \psi \psi_q = 0
\end{equation}
for $t > 0$ and $q \in \bbC$ with nonnegative real part.  This shows
for entropy solutions what had previously been observed by Carraro and
Duchon for statistical solutions \cite{Carraro98}, namely that the
Laplace exponent \eqref{eqn:laplace-exponent} evolves according to
Burgers' equation!

In 2007 Menon and Pego \cite{Menon07} used the L\'evy-Khintchine
representation for the Laplace exponent \eqref{eqn:laplace-exponent}
and observed that the evolution according to Burgers' equation in
\eqref{eqn:laplaceburgers} corresponds to a Smoluchowski coagulation
equation \cite{Smoluchowski16, Aldous99}, with additive collision
kernel, for the jump measure of the L\'evy process $y(\cdot, t)$.  The
jumps of $y(\cdot, t)$ correspond to shocks in the solution
$\rho(\cdot, t)$.  The relative velocity of successive shocks can be
written as a sum of two functions, one depending on the positions of
the shocks and the other proportional to the sum of the sizes of the
jumps in $y(\cdot, t)$.  Regarding the sizes of the jumps as the usual
masses in the Smoluchowski equation, it is plausible that Smoluchowski
equation with additive kernel should be relevant, and \cite{Menon07}
provides the details that verify this.

It is natural to wonder whether this evolution through Markov
processes with simple statistical descriptions is a \emph{miracle}
\cite{MenonMiracle12} confined to the Burgers-L\'evy case, or an
instance of a more general phenomenon.  However, extending the results
of Bertoin \cite{Bertoin98} beyond the Burgers case $H(p) =
-\frac{1}{2} p^2$ remains a challenge.  A different particular case,
corresponding to $L(q) = (-H)^*(q) = |q|$, is a problem of determining
Lipschitz minorants, and has been investigated by Abramson and Evans
\cite{AbramsonEvans13}.  From the PDE perspective this is not as
natural, since $(-H)^*(q) = |q|$ corresponds to
\begin{equation}
  -H(p) = +\infty \, \bOne(|p| > 1),
\end{equation}
i.e.\ $-H(p)$ takes the value $0$ on $[-1,+1]$ and is equal to
$+\infty$ elsewhere.  So \cite{AbramsonEvans13}, while very
interesting from a stochastic processes perspective, has a specialized
structure which is rather different from those cases we will consider.

The biggest step toward understanding the problem for a wide class of
$H$ is found in a 2010 paper of Menon and Srinivasan \cite{Menon10}.
Here it is shown that when the initial condition $\xi$ is a
\emph{spectrally negative}\footnote{i.e.\ without positive jumps}
strong Markov process, the backward Lagrangian process $y(\cdot, t)$
and the solution $\rho(\cdot, t)$ remain Markov for fixed $t > 0$, the
latter again being spectrally negative.  The argument is adapted from
that of \cite{Bertoin98} and both \cite{Bertoin98, Menon10} use the
notion of splitting times (due to Getoor \cite{Getoor79}) to verify
the Markov property according to its bare definition.  In the
Burgers-L\'evy case, the independence and homogeneity of the
increments can be shown to survive, from which additional regularity
is immediate using standard results about L\'evy processes
\cite{Kallenberg02}.  As \cite{Menon10} points out, without these
properties it is not clear whether a Feller process initial condition
leads to a Feller process in $x$ at later times.  Nonetheless,
\cite{Menon10} presents a very interesting conjecture for the
evolution of the generator of $\rho(\cdot, t)$, which has a remarkably
nice form and follows from multiple (nonrigorous, but persuasive)
calculations.

We now give a partial statement of this conjecture.  The generator
$\cA$ of a stationary, spectrally negative Feller process acts on test
functions $J \in C^\infty_c(\bbR)$ by
\begin{equation}
  \label{eqn:feller}
  (\cA J)(y) = b(y) J'(y) + \int_{-\infty}^y (J(z) - J(y))
  \, f(y, dz)
\end{equation}
where $b(y)$ characterizes the drift and $f(y, \cdot)$ describes the
law of the jumps.  If we allow $b$ and $f$ to depend on $t$, we have a
family of generators.  The conjecture of \cite{Menon10} is that the
evolution of the generator $\cA$ for $\rho(\cdot, t)$ is given by the
Lax equation
\begin{equation}
  \label{eqn:lax}
  \dot{\cA} = [\cA, \cB] = \cA \cB - \cB \cA
\end{equation}
for $\cB$ which acts on test functions $J$ by
\begin{equation}
  \label{eqn:timegen}
  (\cB J)(y) = - H'(y) b(t,y) J'(y) - 
  \int_{-\infty}^y \frac{H(y) - H(z)}{y-z} (J(z) - J(y)) \, f(t, y,
  dz).
\end{equation}
An equivalent form of the conjecture \eqref{eqn:lax} involves a
kinetic equation for $f$.  The key result in the present article
verifies that this kinetic equation holds in the special case we will
consider.  Before we state this, let us establish our working
notation.

\subsection{Notation}
\label{sub:notation}

Here we collect some of the various notation used later in the
article.

Write $H[p_1,\ldots,p_n]$ for the $n^{\mathrm{th}}$ divided difference
of $H$ through $p_1,\ldots,p_n$.  For each $n$ this function is
symmetric in its arguments, and given by the following formulas in the
cases $n = 2$ and $n = 3$ where $p_1,p_2,p_3$ are distinct:
\begin{equation}
  H[p_1,p_2] = \frac{H(p_2) - H(p_1)}{p_2 - p_1} \quad \text{and}
  \quad H[p_1,p_2,p_3] = \frac{H[p_2,p_3] - H[p_1,p_2]}{p_3 - p_1}.
\end{equation}
The definition for general $n$ and standard properties can be found in
several numerical analysis texts, including \cite{Hildebrand87}.

We write $\delta_x$ for the usual point mass assigning unit mass to
$x$ and zero elsewhere.

For $C > 0$ and $n$ a positive integer, write 
\begin{equation}
  \Delta^C_n = \{(a_1,\ldots,a_n) \in \bbR^n : 0 < a_1 < \cdots < a_n
  < C\}
\end{equation}
and $\overline{\Delta^C_n}$ for the closure of this set in $\bbR^n$.
We write $\partial \Delta^C_n$ for the boundary of the simplex, and
$\partial_i \Delta^C_n$ for its various faces: 
\begin{equation}
  \begin{aligned}
    \partial_0 \Delta^C_n &= \{(a_1,\ldots,a_n) \in \partial \Delta^C_n
    : a_1 = 0\} \\
    \partial_i \Delta^C_n &= \{(a_1,\ldots,a_n) \in \partial
    \Delta^C_n : a_i = a_{i+1}\}, \qquad i = 1,\ldots,n-1 \\
    \partial_n \Delta^C_n &= \{(a_1,\ldots,a_n) \in \partial
    \Delta^C_n : a_n = C\}.
  \end{aligned}
\end{equation}

Write $\cM$ for the set of finite, regular (signed) measures on
$[0,P]$, which is a Banach space when equipped with the total
variation norm $\|\cdot\|_{\TV}$.  Call its nonnegative subset
$\cM_+$.

Let $\cK$ denote the set of bounded signed kernels from $[0,P]$ to
$[0,P]$, i.e.\ the set of $k : [0,P] \mapsto \cM$ which are measurable
when $[0,P]$ is endowed with its Borel $\sigma$-algebra and $\cM$ is
endowed with the $\sigma$-algebra generated by evaluation on Borel
subsets of $[0,P]$, and which satisfy 
\begin{equation}
  \|k\| = \sup \{ \|k(\rho_-, \cdot)\|_{\TV} : \rho_- \in [0,P]\}
  < \infty.
\end{equation}
Observe that $\cK$ is a Banach space; completeness holds since a
Cauchy sequence $k_n$ has $k_n(\rho_-,\cdot)$ Cauchy in total
variation for each $\rho_-$, and we obtain a pointwise limit
$k(\rho_-,\cdot)$.  Measurability of $k(\cdot, A)$ for each Borel
$A \subseteq [0,P]$ then holds since this is a real-valued pointwise
limit of $k_n(\cdot, A)$.  Write $\cK_+$ for the subset of $\cK$ with
range contained in $\cM_+$.

\subsection{Main result}
\label{sub:main}

In this section we provide a statistical description of solutions to
the scalar conservation law when the initial condition is an
increasing, pure-jump Markov process given by a rate kernel $g$.  For
this we require some assumptions on the rate kernel $g$ and the
Hamiltonian $H$.

\begin{assumption}
  \label{asm:const-rate}
  The initial condition $\xi = \xi(x)$ is a pure-jump Markov process
  starting at $\xi(0) = 0$ and evolving for $x > 0$ according to a
  rate kernel $g(\rho_-,d\rho_+)$.  We assume that for some
  constant $P > 0$ the kernel $g$ is supported on
  \begin{equation}
    \{(\rho_-,\rho_+) : 0 \leq \rho_- \leq \rho_+ \leq P\}
  \end{equation}
  and has total rate which is constant in $\rho_-$: 
  \begin{equation}
    \lambda = \int g(\rho_-, d\rho_+)
  \end{equation}
  for all $0 \leq \rho_- \leq P$.  In particular, $g \in \cK_+$.
\end{assumption}

\begin{assumption}
  \label{asm:H}
  The Hamiltonian function $H : [0,P] \to \bbR$ is smooth, convex, has
  nonnegative right-derivative at $p = 0$ and noninfinite
  left-derivative at $p = P$.
\end{assumption}

We provide for $x \geq 0$ a statistical description consisting of a
one-dimensional marginal at $x = 0$ and a rate kernel generating the
rest of the path.  The evolution of the rate kernel is given by the
following kinetic equation, and the evolution of the marginal will be
described in terms of the solution to the kinetic equation.

\begin{definition}
  \label{def:kinetic-solution}
  We say that a continuous mapping $f : [0,\infty) \to \cK_+$ is a
  \emph{solution of the kinetic equation}
  \begin{equation}
    \left\{
      \begin{aligned}
        f_t &= \cL^\kappa f \\
        f(0,\rho_-,d\rho_+) &= g(\rho_-,d\rho_+),
      \end{aligned}
    \right.
  \end{equation}
  where 
  \begin{equation}
    \label{eqn:kinetic}
    \begin{aligned}
      &\cL^{\kappa} f(t,\rho_-,d\rho_+) \\
      &= \int (H[\rho_*,\rho_+] - H[\rho_-,\rho_*])
      f(t,\rho_-,d\rho_*) f(t,\rho_*,d\rho_+) \\
      &\quad - \left[\int H[\rho_+,\rho_*] f(t,\rho_+,d\rho_*) - \int
        H[\rho_-,\rho_*] f(t,\rho_-,d\rho_*)\right]
      f(t,\rho_-,d\rho_+),
    \end{aligned}
  \end{equation}
  provided that
  $\theta \mapsto \cL^{\kappa} f(\theta, \cdot, \cdot) \in \cK$ is
  Bochner-integrable and
  \begin{equation}
    f(t,\cdot,\cdot) - f(s,\cdot,\cdot) = \int_s^t \cL^\kappa
    f(\theta, \cdot, \cdot) \, d\theta
  \end{equation}
  for all $0 \leq s \leq t < \infty$.
\end{definition}

\begin{definition}
  \label{def:marginal-solution}
  We say that a continuous mapping $\ell : [0,\infty) \to \cM_+$ is a
  \emph{solution of the marginal equation} 
  \begin{equation}
    \left\{
      \begin{aligned}
        \ell_t &= \cL^0 \ell \\
        \ell(0,d\rho_0) &= \delta_0(d\rho_0),
      \end{aligned}
    \right.
  \end{equation}
  where 
  \begin{multline}
    \label{eqn:marginal}
    \cL^0 \ell(t,d\rho_0) = \int H[\rho_*,\rho_0] \ell(t,d\rho_*)
    f(t,\rho_*,d\rho_0) \\
    - \left[\int H[\rho_0,\rho_*] f(t, \rho_0, d\rho_*)\right]
    \ell(t,d\rho_0),
  \end{multline}
  provided that $\theta \mapsto \cL^0 \ell(\theta, \cdot) \in
  \cM$ is Bochner-integrable and 
  \begin{equation}
    \ell(t,\cdot) - \ell(s,\cdot) = \int_s^t \cL^0 \ell(\theta, \cdot)
    \, d\theta
  \end{equation}
  for all $0 \leq s \leq t < \infty$.
\end{definition}

\begin{theorem}
  \label{thm:kinetic}
  Suppose Assumptions~\ref{asm:const-rate} and \ref{asm:H} hold.  Then
  the kinetic and marginal equations have unique solutions, in the
  sense of Definitions~\ref{def:kinetic-solution} and
  \ref{def:marginal-solution}.  Furthermore, the total integrals are
  conserved:
  \begin{align}
    \label{eqn:total-int}
    \lambda &= \int f(t,\rho_-,d\rho_+) \\
    1 &= \int \ell(t,d\rho_0) 
  \end{align}
  for all $t \geq 0$ and all $0 \leq \rho_- \leq P$.  Finally, if 
  \begin{equation}
    g(\rho_-, [0,\rho_-] \cup \{P\}) = 0
  \end{equation}
  for all $0 \leq \rho_- < P$, then $f(t,\cdot,\cdot)$ has the same
  property for $t > 0$.
\end{theorem}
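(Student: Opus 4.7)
The plan is to treat the kinetic equation as a quadratic ODE in the Banach space $\cK$, establish local well-posedness by Picard iteration, then use the monotone structure and the conservation of total rate to promote the local solution to a global one. Assumption \ref{asm:H} makes $H$ smooth on a compact interval, so all divided differences that appear are uniformly bounded by some constant $M$. A direct estimate using this gives $\|\cL^{\kappa} f\|_{\cK} \lesssim M \|f\|_{\cK}^{2}$ and the corresponding quadratic Lipschitz bound
\[
  \|\cL^{\kappa} f_{1} - \cL^{\kappa} f_{2}\|_{\cK}
  \leq C M \bigl(\|f_{1}\|_{\cK} + \|f_{2}\|_{\cK}\bigr) \|f_{1} - f_{2}\|_{\cK}.
\]
Standard Picard iteration on the integral equation $f(t) = g + \int_{0}^{t} \cL^{\kappa} f(s)\,ds$ in $C([0,T]; \cK)$ yields a unique solution on a maximal interval $[0, T_{*})$, which is automatically continuous into $\cK$ and is the only candidate for a kinetic solution in the sense of Definition \ref{def:kinetic-solution}.

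The next step is to preserve positivity and the monotone support property. I would rewrite $\cL^{\kappa}$ in gain–loss form,
\[
  \cL^{\kappa} f(t,\rho_{-},d\rho_{+})
  = G[f](t,\rho_{-},d\rho_{+})
    - \bigl(\mu(t,\rho_{+}) - \mu(t,\rho_{-})\bigr) f(t,\rho_{-},d\rho_{+}),
\]
with $\mu(t,\rho) = \int H[\rho,\rho_{*}] f(t,\rho,d\rho_{*})$ and $G[f]$ the first integral in \eqref{eqn:kinetic}, and then pose the equivalent Duhamel form
\[
  f(t,\rho_{-},d\rho_{+})
  = e^{-\int_{0}^{t}(\mu(\tau,\rho_{+})-\mu(\tau,\rho_{-}))\,d\tau}\, g(\rho_{-},d\rho_{+})
    + \int_{0}^{t} e^{-\int_{s}^{t}(\mu(\tau,\rho_{+})-\mu(\tau,\rho_{-}))\,d\tau}
      G[f](s,\rho_{-},d\rho_{+})\,ds
\]
as a fixed-point problem on the closed subcone of $\cK_{+}$ consisting of kernels $k$ with $k(\rho_{-}, [0,\rho_{-}] \cup \{P\}) = 0$ for $\rho_{-} < P$. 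Convexity of $H$ gives $H[\rho_{*},\rho_{+}] - H[\rho_{-},\rho_{*}] \geq 0$ whenever $\rho_{-} \leq \rho_{*} \leq \rho_{+}$, so on this subcone the gain $G[f]$ is a nonnegative measure concentrated in $\{\rho_{+} > \rho_{-}, \rho_{+} < P\}$, and the Duhamel map sends the subcone into itself. A contraction argument on a short time interval then gives a solution inside the subcone, which coincides with the Picard solution by uniqueness in $\cK$.

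With positivity in hand, I would prove mass conservation by computing $\Phi_{t}(t,\rho_{-})$ for $\Phi(t,\rho_{-}) = \int f(t,\rho_{-},d\rho_{+})$; after the symmetric terms cancel one gets the closed linear equation
\[
  \Phi_{t}(t,\rho_{-})
  = \int H[\rho_{-},\rho_{*}]\, f(t,\rho_{-},d\rho_{*}) \bigl[\Phi(t,\rho_{-}) - \Phi(t,\rho_{*})\bigr].
\]
Since $\Phi(0,\rho_{-}) = \lambda$ by Assumption \ref{asm:const-rate} and the constant $\lambda$ solves this linear equation, Gronwall-type uniqueness forces $\Phi \equiv \lambda$. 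Combined with positivity, this gives the uniform bound $\|f(t)\|_{\cK} \leq \lambda$, ruling out blow-up and forcing $T_{*} = \infty$. The marginal equation is linear in $\ell$ once $f$ is fixed, so it admits a unique global solution by Banach-space linear ODE theory; a parallel integration argument shows that $\int \cL^{0} \ell(t,d\rho_{0})$ vanishes identically (the two double integrals are equal after swapping dummy variables), which yields $\int \ell(t,d\rho_{0}) = 1$ for all $t$. The main obstacle is the second step: because $\cL^{\kappa}$ is genuinely quadratic and the loss coefficient $\mu(t,\rho_{+})-\mu(t,\rho_{-})$ changes sign, naive Picard iteration does not preserve the monotone cone, and the Duhamel reformulation is essential to keep the iteration inside the cone where convexity of $H$ can be exploited — the conservation of total rate and global existence then follow almost automatically.
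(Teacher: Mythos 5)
Your proposal is correct in substance but follows a genuinely different route from the paper. The paper constructs solutions via a time-discretized (forward Euler) scheme applied to the exponentially rescaled unknown $h(t) = e^{ct}f(t)$ with $c = \lambda H'(P)$: at each step the added term $+c h$ compensates the worst-case size of the loss term, so nonnegativity and the support constraint are preserved \emph{by construction} at each Euler node, along with the geometric bound $\|h^n_j\| \leq \lambda(1+c/n)^j$ that closes the induction; convergence of the scheme is then shown directly via a Gr\"onwall-type estimate on $\|h^m - h^n\|$. Your approach is the more textbook one: Picard iteration for local well-posedness using the quadratic-Lipschitz bound on $\cL^\kappa$, followed by a mild/Duhamel reformulation to expose that the gain $G[f]$ is nonnegative on the monotone cone (via convexity of $H$, exactly the observation the paper also exploits) and that the exponential weight keeps the iteration inside the cone. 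Both arguments then recognize the same algebraic cancellation in $\int \cL^\kappa f(t,\rho_-,d\rho_+)$ — you state it as the closed linear equation for $\Phi(t,\rho_-)$, the paper as property (b) of the scheme — and both use positivity plus constant total rate to get the a priori bound $\|f(t)\| \leq \lambda$ and hence global existence. What the paper's scheme buys is that positivity, total-rate conservation, and the support property are \emph{simultaneously} inductive invariants of a single concrete approximation, avoiding the need to verify separately that a Duhamel fixed-point map, whose exponential weight itself depends on the unknown through $\mu[f]$, is a contraction on the cone; what your route buys is a cleaner separation of concerns and independence from the specific discretization.

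Two small points to tighten. First, for the existence/uniqueness/conservation part of the theorem you should work on the cone $\{k \in \cK_+ : k(\rho_-,[0,\rho_-)) = 0\}$, which is all that convexity of $H$ requires for nonnegativity of the gain; the stricter no-atom cone $k(\rho_-,[0,\rho_-]\cup\{P\}) = 0$ is only what you need for the ``Finally'' clause, under the extra hypothesis on $g$. Second, when you invoke Gr\"onwall to conclude $\Phi \equiv \lambda$, note that the linear operator in the $\Phi$-equation has norm controlled by $\|f(t)\|_\cK$, which is only known to be finite on compact subintervals of the (a priori unknown) maximal interval $[0,T_*)$; the bootstrap (positivity on $[0,T]$, then mass conservation on $[0,T]$, then $\|f\| \leq \lambda$ on $[0,T]$, then $T$ can be pushed past $T_*$) is correct but should be stated as such to avoid apparent circularity.
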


The kernels described by the Theorem~\ref{thm:kinetic} are precisely
what we need to describe the statistics of the solution $\rho$, which
brings us to our main result:

\begin{theorem}
  \label{thm:unbounded-sys}
  When Assumptions~\ref{asm:const-rate} and \ref{asm:H} hold, the
  solution $\rho$ to
  \begin{equation}
    \label{eqn:scl-unbounded}
    \left\{
      \begin{aligned}
        \rho_t &= H(\rho)_x && (x,t) \in \bbR \times (0,\infty) \\
        \rho &= \xi && (x,t) \in \bbR \times \{t = 0\}
      \end{aligned}
    \right.
  \end{equation}
  for each fixed $t > 0$ has $x = 0$ marginal given by
  $\ell(t,d\rho_0)$ and for $0 < x < \infty$ evolves according to rate
  kernel $f(t,\rho_-,d\rho_+)$.
\end{theorem}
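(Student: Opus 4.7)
The overall plan, following the abstract's indication, is to work first on a truncated spatial interval $[0, L]$ with a carefully chosen random boundary condition at $x = L$, derive the statistical description of $\rho(\cdot, t)|_{[0,L]}$ via a particle-system representation, and then send $L \to \infty$. For the truncation I would extend $\xi$ beyond $L$ by continuing the Markov process with kernel $g$ (or impose an analogous reflecting/absorbing construction) in such a way that the law of $\xi|_{[0, L']}$ is unaffected for any $L' < L$. Because $\xi$ is piecewise constant and monotone and $H$ is convex, the entropy solution $\rho(\cdot, t)$ is itself piecewise constant, with admissible shocks at positions $y_1(t) < y_2(t) < \cdots$ moving with Rankine--Hugoniot speeds $-H[\rho_-, \rho_+]$. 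Adjacent shocks with states $(\rho_-, \rho_*)$ and $(\rho_*, \rho_+)$ approach at relative velocity $H[\rho_*, \rho_+] - H[\rho_-, \rho_*] > 0$ by convexity, and upon collision merge deterministically into a single $(\rho_-, \rho_+)$ shock. This furnishes a finite-particle coagulation dynamics driving the evolution in $t$.

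The next step is to show that, for the truncated problem, the process $x \mapsto \rho(x, t)$ restricted to $[0, L]$ is a pure-jump Markov process in $x$ at each fixed $t > 0$, with a rate kernel $f_L(t, \rho_-, d\rho_+)$ and one-point marginal $\ell_L(t, d\rho_0)$ at $x = 0$. The Markov property is plausibly preserved because the shock interactions are local: a collision between adjacent particles depends only on the two states immediately adjacent, so the strong Markov property of $\xi$ in $x$ should propagate forward in $t$ provided the boundary condition at $x = L$ supplies the correct random closure. I would then read off the kinetic equation by an infinitesimal-in-$t$ accounting of collisions: over time $dt$, two adjacent jumps $(\rho_-, \rho_*)$, $(\rho_*, \rho_+)$ merge at rate proportional to $H[\rho_*, \rho_+] - H[\rho_-, \rho_*]$, yielding the gain term of \eqref{eqn:kinetic}; and the loss terms arise from the absorption of a $\rho_-$-jump by its right-neighbor. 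The analogous computation at $x = 0$ gives \eqref{eqn:marginal} for $\ell_L$. Hence $(f_L, \ell_L)$ solves the kinetic and marginal equations of Definitions~\ref{def:kinetic-solution} and \ref{def:marginal-solution}.

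Finally I would pass to $L \to \infty$. By construction, the restriction of $\rho(\cdot, t)$ to any fixed bounded interval $[0, L_0]$ in the truncated system has the same law as in the true solution. Using uniqueness in Theorem~\ref{thm:kinetic}, together with the conservation law \eqref{eqn:total-int} (which provides control on total rates), any limit point of $(f_L, \ell_L)$ must coincide with the unique $(f, \ell)$ of the theorem. This identifies the finite-dimensional distributions of $\rho(\cdot, t)$ on $[0, L_0]$ with those of the Markov process generated by the kernel $f(t, \cdot, \cdot)$ started at $\ell(t, \cdot)$, and since $L_0$ was arbitrary, the full Markov description on $[0, \infty)$ follows.

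The main obstacle is the middle step: rigorously establishing the Markov-in-$x$ property for $\rho(\cdot, t)$ when $t > 0$, and identifying the right boundary closure at $x = L$. Correlations between distant shocks can in principle be generated deterministically through the cascade of past collisions, so showing that, conditionally on $\rho(x, t)$, the histories on $[0, x]$ and $[x, L]$ decouple is nontrivial. I anticipate adapting either the splitting-time technique of Bertoin \cite{Bertoin98} and Menon--Srinivasan \cite{Menon10}, or a direct generator-level argument consistent with the Lax structure \eqref{eqn:lax}, to close the Markovian recursion at each $t$. Subsidiary difficulties include verifying the Bochner-integrability hypotheses in Definitions~\ref{def:kinetic-solution} and \ref{def:marginal-solution} for $(f_L, \ell_L)$, and ensuring that the $L \to \infty$ passage commutes with the nonlinear dynamics; both should be manageable once the Markov-closure step is in hand.
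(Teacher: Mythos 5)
Your high-level architecture (truncate to $[0,L]$, solve, then send $L\to\infty$ using locality) matches the paper's, and you correctly flag the Markov-in-$x$ closure as the crux. But there are two genuine gaps. First, the boundary condition you propose at $x = L$ is wrong in a way that breaks the whole scheme: you suggest extending $\xi$ past $L$ by the \emph{static} kernel $g$, but the paper's Theorem~\ref{thm:bounded-sys} requires a \emph{time-dependent} random boundary process $\zeta(t)$ started at $\xi(L)$ and evolving with rate kernel $H[\rho_-,\rho_+]\,f(t,\rho_-,d\rho_+)$, where $f$ is the solution of the kinetic equation itself. This is the self-consistent ``exact'' closure that makes the candidate law $\mu(t,dq)$ invariant; with the $g$-extension the law on $[0,L]$ would only approximate the truncated restriction of the unbounded solution, and you would then have to quantify how errors from $x=L$ propagate inward, which is precisely the burden the paper's choice of $\zeta$ is designed to avoid.

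Second, the logical order of your middle step is inverted relative to what actually works. You propose to first establish that $\rho(\cdot,t)$ is Markov in $x$ and then ``read off'' the kinetic equation from infinitesimal collision rates; you acknowledge you don't know how to do the first part, and indeed a splitting-time argument in the spirit of Bertoin or Menon--Srinivasan is not what the paper does. The paper never proves the Markov-in-$x$ property as a standalone intermediate result. Instead it first proves well-posedness of the kinetic and marginal equations (Theorem~\ref{thm:kinetic}), uses their solutions $(f,\ell)$ to \emph{construct} a candidate product-form law $\mu(t,dq)$ on particle configurations, and then verifies by a generator/duality computation (Lemmas~\ref{lem:tcontinuity}--\ref{lem:diffmu} and the proof of Theorem~\ref{thm:bounded-sys}) that $\int \bbE\,G(\Phi_0^T q)\,\mu(0,dq) = \int G(q)\,\mu(T,dq)$ for Laplace functionals $G$. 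The Markov-in-$x$ structure of $\rho(\cdot,t)$ is then an automatic consequence of the explicit form of $\mu(T,\cdot)$, not a prerequisite. Finally, the $L\to\infty$ step is not ``by construction''; it relies explicitly on finite speed of propagation (bounded by $H'(P)$) to couple the bounded and unbounded solutions on $[0,L-tH'(P)]$, which should be stated rather than left to ``ensuring the passage commutes.''
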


\begin{remark}
  Theorems \ref{thm:kinetic} and \ref{thm:unbounded-sys} establish
  rigorously the conjectured \cite{Menon10} evolution according to the
  Lax pair \eqref{eqn:lax}, within the present hypotheses.  See
  \cite[Section 2.7]{Menon10} for a calculation showing the
  equivalence of the kinetic and Lax pair formulations, which
  simplifies considerably in the present case due to the absence of
  drift terms.  The Lax pair and integrable systems approach (in the
  case of finitely many states, where the generator is a triangular
  matrix) have been further explored by Menon \cite{Menon12} and in a
  forthcoming work by Li \cite{Li15}.
\end{remark}

\subsection{Organization}

The remainder of this article is organized as follows.  In
Section~\ref{sec:particle} we show that
Theorem~\ref{thm:unbounded-sys} will follow from a similar statistical
characterization for the solution to the PDE over $x \in [0,L]$ with a
time-dependent \emph{random} boundary condition at $x = L$.  The
latter we can study using a sticky particle system whose dimension is
random and unbounded, but almost surely finite.  Elementary arguments
are used to check that our candidate for the law matches the evolution
of the random initial condition according to the dynamics.  Next in
Section~\ref{sec:kinetic} we show existence and uniqueness of the
solutions to marginal and kinetic equations of
Theorem~\ref{thm:kinetic}, which are needed to construct the candidate
law.  The concluding Section~\ref{sec:conclusion} indicates some
desired extensions and similar questions for future work.  To keep the
main development concise, proofs of lemmas have been deferred to
Appendix~\ref{app:proofs}.



\section{A random particle system}
\label{sec:particle}

As functions of $x$, the solutions $\rho(x,t)$ we consider all have
the form depicted in Figure~\ref{fig:monotone}.  From a PDE
perspective this situation is standard---a concatenation of Riemann
problems for the scalar conservation law---and we can describe the
solution completely in terms of a particle system.  Each shock
consists of some number of particles stuck together, and the particles
move at constant velocities according to the Rankine-Hugoniot
condition except when they collide.  The collisions are totally
inelastic.

\begin{figure}[h]
  \begin{center}
    \includegraphics{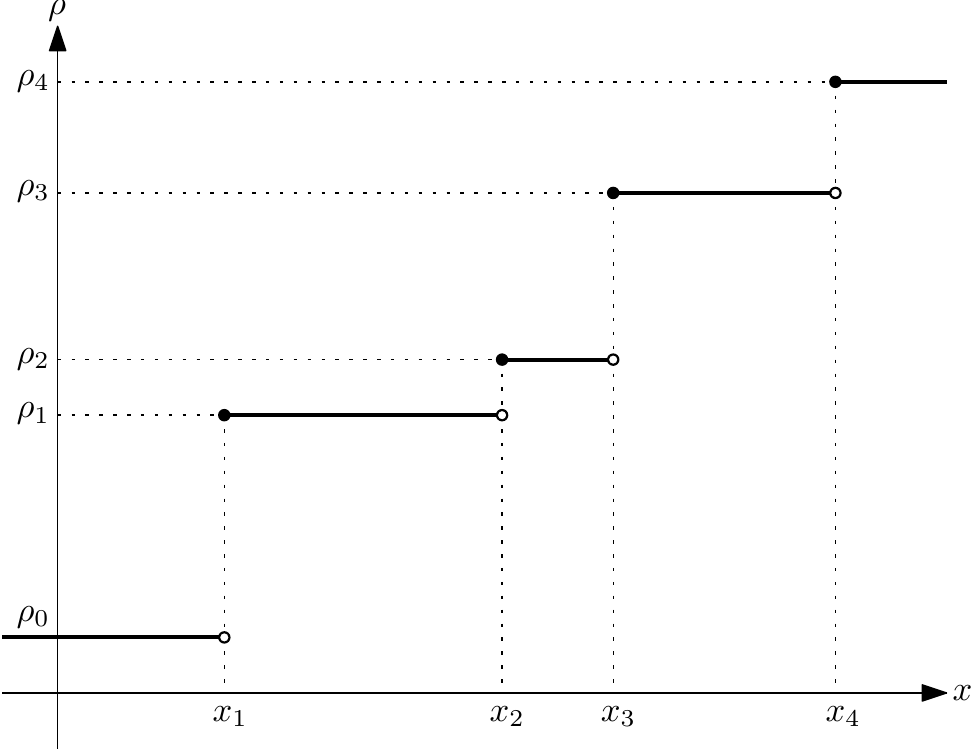}
    \caption{For each $t > 0$, the solution $\rho(x,t)$ is a
      nondecreasing, pure-jump process in $x$.  We will see that for
      any fixed $L > 0$, we have a.s.\ finitely many jumps for $x \in
      [0,L]$ and that $\rho(\cdot,t)$ on this interval can be
      described by two (finite) nondecreasing sequences
      $(x_1,\ldots,x_N; \rho_0,\ldots,\rho_N)$.}
    \label{fig:monotone}
  \end{center}
\end{figure}

The utility of this particle description is lessened, however, by the
fact that the dynamics are quite infinite dimensional for our
pure-jump initial condition $\xi(x)$, $x \in [0,+\infty)$.  To have a
simple description as motion at constant velocities, punctuated by
\emph{occasional} collisions, we might argue that on each fixed
bounded interval of space we have finitely many collisions in a
bounded interval of time, and piece together whatever statistical
descriptions we might obtain for the solution on these various
intervals.  Inspired by those situations in statistical mechanics
where boundary conditions become irrelevant in an infinite-volume
limit, we pursue a different approach.  We construct solutions to a
problem on a bounded space interval $[0,L]$, and choose a random
boundary condition at $x = L$ to obtain an exact match with the
kinetic equations.  The involved analysis will all pertain to the
following result.

\begin{theorem}
  \label{thm:bounded-sys}
  Suppose Assumptions~\ref{asm:const-rate} and \ref{asm:H} hold.  For
  any fixed $L > 0$, consider the scalar conservation law
  \begin{equation}
    \label{eqn:scl-bounded}
    \left\{
      \begin{aligned}
        \rho_t &= H(\rho)_x && (x,t) \in (0,L) \times (0,\infty) \\
        \rho &= \xi && x \in [0,L] \times \{t = 0\} \\
        \rho &= \zeta && (x,t) \in \{x = L\} \times (0,\infty)
      \end{aligned}
    \right.
  \end{equation}
  with initial condition $\xi$ (restricted to $[0,L]$), open
  boundary\footnote{Using Assumption~\ref{asm:H} and $\xi,\zeta \geq
    0$, the shocks and characteristics only flow outward across $x =
    0$.  Any boundary condition we would assign, unless it involved
    negative values, would thus be irrelevant.} at $x = 0$, and random
  boundary $\zeta$ at $x = L$.  Suppose the process $\zeta$ has
  $\zeta(0) = \xi(L)$ and evolves according to the time-dependent rate
  kernel $H[\rho_,\rho_+] f(t,\rho_,d\rho_+)$ independently of $\xi$
  given $\xi(L)$.  Then for all $t > 0$ the law of $\rho(\cdot, t)$ is
  as follows:
  \begin{itemize}
  \item[(i)] the $x = 0$ marginal is $\ell(t,d\rho_0)$, and
  \item[(ii)] the rest of the path is a pure-jump process with rate
    kernel $f(t,\rho_-,d\rho_+)$.
  \end{itemize}
\end{theorem}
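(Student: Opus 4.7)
My plan is to identify a candidate law $Q_t$ on the configuration space for \eqref{eqn:scl-bounded} and verify that it is preserved under the PDE dynamics, so that agreement with the law $P_t$ of the true solution at $t=0$ propagates to all $t \ge 0$. The candidate $Q_t$ is the law described in the theorem: $\rho(0,t) \sim \ell(t, \cdot)$, and conditionally on $\rho(0,t) = \rho_0$ the restriction of $\rho(\cdot, t)$ to $(0,L]$ is the pure-jump Markov process started at $\rho_0$ with rate kernel $f(t, \cdot, \cdot)$.

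I would begin by constructing the sticky-particle representation of $\rho(\cdot, t)$ on $[0,L]$. Since $\xi$ has finite total jump rate $\lambda$ (Assumption~\ref{asm:const-rate}) and $\zeta$ jumps at rate $\int H[\rho_-, \rho_+] f(t, \rho_-, d\rho_+)$ (finite by Assumption~\ref{asm:H} and Theorem~\ref{thm:kinetic}), almost surely at each $t$ only finitely many shocks are present, at positions $0 < x_1 < \cdots < x_N < L$ with levels $\rho_0 < \cdots < \rho_N$. Between events each $x_i$ moves at the Rankine--Hugoniot velocity $v_i = -H[\rho_{i-1}, \rho_i]$; adjacent shocks merge inelastically when $x_{i+1} - x_i \to 0$, collapsing the intermediate level; shocks exit across $x = 0$ (consistent with $v_1 \le 0$ by convexity and $H'(0^+) \ge 0$); and new shocks are injected at $x = L$ each time $\zeta$ jumps.

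Next I would write the candidate density on configurations $(n; x_1, \ldots, x_n; \rho_0, \ldots, \rho_n)$ with $0 < x_1 < \cdots < x_n < L$,
\begin{equation*}
q_t = e^{-\lambda L}\, \ell(t, d\rho_0) \prod_{i=1}^n f(t, \rho_{i-1}, d\rho_i) \, dx_1 \cdots dx_n,
\end{equation*}
which is well defined because $\int f(t, \rho_-, d\rho_+) = \lambda$ by Theorem~\ref{thm:kinetic}. Since $\ell(0) = \delta_0$ and $f(0) = g$, $q_0$ is the law of $\xi|_{[0,L]}$, so $Q_0 = P_0$. Differentiating $q_t$ via Definitions~\ref{def:kinetic-solution} and \ref{def:marginal-solution}, the time derivative splits into pieces built from the coagulation integral $\int (H[\rho_*,\rho_+] - H[\rho_-,\rho_*]) f(\rho_-, d\rho_*) f(\rho_*, d\rho_+)$ and the velocity-weighted totals $\int H[\rho_\pm, \rho_*] f(\rho_\pm, d\rho_*)$. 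On the particle side, the forward generator contributes (a) transport $\sum_i v_i \partial_{x_i}$, which after integration by parts yields divergence loss terms plus boundary fluxes across the faces of $\Delta^L_n$; (b) merger events at $\{x_i = x_{i+1}\}$ collapsing the stratum of size $n+1$ onto size $n$ at local rate $v_i - v_{i+1} = H[\rho_*, \rho_+] - H[\rho_-, \rho_*]$; and (c) random injection at $x = L$. The key algebraic match is that the merger rate in (b) is exactly the coagulation kernel of $\cL^\kappa$; the prescribed injection kernel $H[\rho_-, \rho_+] f(t, \rho_-, d\rho_+)$ is exactly what is needed to cancel the transport flux at $\partial_n \Delta^L_n$ and restore the factor $f(t, \rho_n, \cdot)$ there; and the marginal equation $\ell_t = \cL^0 \ell$ matches the flux at $\partial_0 \Delta^L_n$ together with the loss due to shocks exiting at $x = 0$.

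The main obstacle I anticipate is the bookkeeping in this last matching: one must carefully track signed contributions across every face $\partial_i \Delta^L_n$, verify multiplicities after merger-induced collapse between strata of different $n$, and pair each gain and loss term in the kinetic/marginal equations with the corresponding term in the particle generator. Once the forward equations for $P_t$ and $Q_t$ are shown to agree, a Gronwall estimate on $\|P_t - Q_t\|_{\TV}$, using boundedness of $H'$ on $[0,P]$ from Assumption~\ref{asm:H} and the finite rate $\lambda$, gives $P_t = Q_t$ for all $t \ge 0$, from which reading off the $x = 0$ marginal and the increment structure yields (i) and (ii).
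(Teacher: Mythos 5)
Your plan and the paper's proof share the same guiding intuition—build the explicit candidate law $\mu(t,dq)$ on configurations from $\ell$ and $f$, check that it agrees at $t=0$, and show that some time-derivative identity forces agreement at all later times—but the routes diverge in a way that matters. You propose to match the \emph{forward} Kolmogorov equations for the true law $P_t$ and the candidate $Q_t = \mu(t,\cdot)$, then invoke uniqueness via Gronwall. The paper instead works with the \emph{backward} quantity $F(t,q)=\bbE G(\Phi_t^T q)$ for Laplace functionals $G$, and shows that the pairing $\int F(t,q)\,\mu(t,dq)$ is constant in $t$ (equations \eqref{eqn:unchanging}–\eqref{eqn:GzeroT}). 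The burden of differentiation is then carried entirely by $\mu(t,dq)$, whose $x$-dependence is uniform Lebesgue measure on the simplex; the deterministic flow enters only as a volume-preserving shift, which is analyzed by explicit change of variables near each face $\partial_i\Delta^L_n$ (Lemma~\ref{lem:boundary-approx}), while $F$ is controlled only through its uniform continuity in $t$ (Lemma~\ref{lem:tcontinuity}) and a one-step separation of the random boundary entries (Lemma~\ref{lem:separate-random}).

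The genuine gap in your outline is item you gloss over as ``bookkeeping'': to carry out the forward-equation comparison you must first establish that $P_t$ satisfies a forward equation of the required form, i.e.\ that $\frac{d}{dt}\langle J,P_t\rangle=\langle \cL_t J,P_t\rangle$ for generators $\cL_t$ that contain the transport term $\sum_i v_i\partial_{x_i}$ and the boundary collapse. Integrating such a generator against $P_t$—which has no explicit formula, may carry mass very near the collision faces, and is only known trajectory-wise—requires regularity of $P_t$ that you have not addressed; the paper explicitly anticipates and declines precisely this route (``We might verify \eqref{eqn:unchanging} by establishing regularity for $F(t,q)$ in $t$ and the $x$-components of $q$\ldots We pursue instead an argument along these lines''). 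Likewise you never restrict to observables factoring through $\pi(q,\cdot)$, which is what lets the paper identify configurations like $(0,x_2,\ldots;\rho_0,\rho_1,\ldots)$ with $(x_2,\ldots;\rho_1,\ldots)$ and makes the boundary bookkeeping close. As written, your argument needs either a new regularity input for $P_t$ or a reformulation in the spirit of the paper's duality; without one of these, the Gronwall step has no rigorous forward equation for $P_t$ to compare against.
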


To prove our main result we can send $L \to \infty$, applying
Theorem~\ref{thm:bounded-sys} on each $[0,L]$, and use bounded speed
of propagation to limit the respective influences of far away
particles (unbounded system) or truncation with random boundary
(bounded system).  The argument is quite short.

\begin{proof}[Proof of Theorem~\ref{thm:unbounded-sys}]
  Fix any $t > 0$.  We will write $\hat{\rho}(x,t)$ for the solution
  to \eqref{eqn:scl-unbounded} over the semi-infinite $x$-interval
  $[0,+\infty)$ with initial data $\hat{\xi}(x)$.  We take the
  right-continuous version of the solution.  For $L$ to be specified
  shortly, write $\rho(x,t)$ for the solution and $\xi(x)$ for the
  initial data corresponding to \eqref{eqn:scl-bounded}.

  Fix any $x_1,\ldots,x_k \in [0,+\infty)$, and let $X =
  \max\{x_1,\ldots,x_k\}$.  Choose $L > X + t H'(P)$.  We couple the
  bounded and unbounded systems by requiring 
  \begin{equation}
    \hat{\xi}|_{[0,L]} = \xi,
  \end{equation}
  allowing the random boundary $\zeta$ to evolve independently of
  $\hat{\xi}$ given $\hat{\xi}(L)$

  Recall \cite{Evans10} that the scalar conservation law has finite
  speed of propagation.  Our solutions are bounded in $[0,P]$, so the
  speed is bounded by $H'(P)$.  Since $\hat{\rho}(x,0)$ and
  $\rho(x,0)$ are a.s.\ equal on $[0,L]$, we have also
  $\hat{\rho}(x,t) = \rho(x,t)$ a.s.\ for $x \in [0,L-tH'(P)] \supset
  [0,X]$.  From this we deduce the distributional equality
  \begin{equation}
    (\hat{\rho}(x_1,t),\ldots,\hat{\rho}(x_k,t)) \stackrel{d}{=}
    (\rho(x_1,t),\ldots,\rho(x_k,t)).
  \end{equation}
  By Theorem~\ref{thm:bounded-sys}, the latter distribution is exactly
  that of a process started according to $\ell(t,d\rho_0)$, evolving
  according to rate kernel $f(t,\rho_-,d\rho_+)$.  This process is
  terminated deterministically at $x = L$, which does not alter
  finite-dimensional distributions prior to $x = L$.  Since
  $\rho(x,t)$ is right-continuous and has the correct
  finite-dimensional distributions, the result follows.
\end{proof}

\subsection{The dynamics}
\label{sub:dynamics}

Our work in the remainder of this section is to prove
Theorem~\ref{thm:bounded-sys}.  We begin by describing precisely those
particle dynamics which determine the solution to the PDE.

Figure~\ref{fig:monotone} illustrates a parametrization of a
nondecreasing pure-jump process on $[0,+\infty)$ with heights
$\rho_0,\rho_1,\rho_2,\ldots$ and jump locations $x_1,x_2,\ldots$.
Our sign restriction on the jumps excludes rarefaction waves, and we
have only constant values separated by shocks.  Going forward in
time, the shocks move according to the Rankine-Hugoniot condition
\begin{equation}
  \dot{x}_i = -H[\rho_{i-1},\rho_i]
\end{equation}
until they collide.  We say that each shock consists initially of one
\emph{particle} moving at the velocity indicated above.  Then result
of a collision can be characterized in two equivalent ways:
\begin{itemize}
\item[(i)] At the first instant when $x_i = x_{i+1}$, the
  $i^{\mathrm{th}}$ particle is annihilated, and the velocity of the
  $(i+1)^{\mathrm{th}}$ particle changes from $-H[\rho_i,\rho_{i+1}]$
  to
  \begin{equation}
    \dot{x}_{i+1} = - H[\rho_{i-1},\rho_{i+1}].
  \end{equation}
  In the case\footnote{which will almost surely not occur for the
    randomness we consider} where several consecutive particles
  collide with each other at the same instant, all but the rightmost
  particle is annihilated.  Since we only seek a statistical
  description for $x \geq 0$, we annihilate the first particle when
  $x_1 = 0$, replace $\rho_0$ with $\rho_1$, and relabel the other
  particles accordingly.
\item[(ii)] When $x_{i-1} < x_i = x_{i+1} = \cdots = x_{j-1} = x_j <
  x_{j+1}$, the particles $i$ through $j$ all move with common
  velocity $-H[\rho_{i-1},\rho_j]$.  Following Brenier et al
  \cite{Brenier98,Brenier13} we call these a \emph{sticky particle
    dynamics}.  We can additionally take the position $x = 0$ to be
  absorbing: $\dot{x_i} = 0$ whenever $x_i = 0$.
\end{itemize}
We adopt the first viewpoint for convenience, as this is compatible
with \cite{Davis84} (see the text following
Definition~\ref{def:dynamics} below), but a suitable argument could be
given for the second alternative as well.

\begin{definition}
  \label{def:config-space}
  For $L$ as in Theorem~\ref{thm:bounded-sys}, the configuration space
  $Q$ for the sticky particle dynamics is
  \begin{equation}
    Q = \bigsqcup_{n=0}^\infty Q_n, \qquad Q_n = \Delta^L_n
    \times \overline{\Delta^P_{n+1}}.
  \end{equation}
  A typical configuration is $q =
  (x_1,\ldots,x_n;\rho_0,\ldots,\rho_n) \in Q_n$ when $n>0$, or $q =
  (\rho_0) \in Q_0=\{\rho_0: \ 0\le \rho_0\le P\}$ when $n=0$.
\end{definition}

\begin{definition}
  \label{def:dynamics}
  Our notation for the particle dynamics is as follows:
  \begin{itemize}
  \item[(i)] For $0 \leq s \leq t$ and $q \in Q$, write 
    \begin{equation}
      \phi_s^t q = \phi_0^{t-s} q
    \end{equation}
    for the deterministic evolution from time $s$ to $t$ of the
    configuration $q$ according to the annihilating particle dynamics
    for the PDE, \emph{without} random entry dynamics at $x = L$.
  \item[(ii)] Given a configuration $q =
    (x_1,\ldots,x_n,\rho_0,\ldots,\rho_n)$ and $\rho_+ > \rho_n$,
    write $\epsilon_{\rho_+} q$ for the configuration
    $(x_1,\ldots,x_n,L,\rho_0,\ldots,\rho_n,\rho_+)$.
  \item[(iii)] Write $\Phi_s^t q$ for the \emph{random} evolution of
    the configuration according to deterministic particle dynamics
    interrupted with random entries at $x = L$ according to the
    boundary process $\zeta$ of \eqref{eqn:scl-bounded}, where the
    latter has been started at time $s$ with value $\rho_n$.  In
    particular, if the jumps of $\zeta$ between times $s$ and $t$
    occur at times $s < \tau_1 < \cdots < \tau_k < t$ with values
    $\rho_{n+1} < \cdots < \rho_{n+k}$, then
    \begin{equation}
      \Phi_s^t q = \phi_{\tau_k}^t \epsilon_{\rho_{n+k}}
      \phi_{\tau_{k-1}}^{\tau_k} \epsilon_{\rho_{n+k-1}} \cdots
      \phi_{\tau_1}^{\tau_2} \epsilon_{\rho_{n+1}} \phi_s^{\tau_1} q.
    \end{equation}
  \end{itemize}
\end{definition}

\begin{proposition}
  For any $s,q$, the process $\Phi_s^t q$ is strong Markov.
\end{proposition}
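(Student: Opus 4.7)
The plan is to identify $\Phi_s^t q$ as a piecewise-deterministic Markov process in the sense of Davis \cite{Davis84} and to invoke the strong Markov theorem that holds in that framework. All of the randomness driving $\Phi_s^t q$ enters through the boundary process $\zeta$: between jumps of $\zeta$ the configuration evolves deterministically via $\phi$, and at each jump time $\tau$ of $\zeta$ a new rightmost particle is inserted at $x = L$ via $\epsilon_{\zeta(\tau)}$. A key observation to make before setting up the PDMP is that internal collisions only delete non-rightmost values $\rho_i$, so the rightmost value in the configuration always equals the current value of $\zeta$; consequently the configuration itself carries exactly the conditioning information needed to drive the process forward, with no auxiliary state required to remember $\zeta$.

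To cast $\Phi_s^t q$ as a PDMP one specifies three ingredients on the state space $Q = \bigsqcup_n Q_n$: a deterministic flow, a jump rate, and a transition kernel. The flow $\phi$ is built from the Rankine-Hugoniot ODEs together with the deterministic annihilations described in Section~\ref{sub:dynamics}; on each stratum $Q_n$ this is well-defined and continuous since collisions strictly decrease the particle count, so only finitely many annihilations occur in any bounded interval. The jump rate out of a state with rightmost value $\rho_n$ is
\begin{equation}
\Lambda(t,q) = \int H[\rho_n,\rho_+] \, f(t,\rho_n,d\rho_+),
\end{equation}
which by convexity of $H$ (giving $H[\rho_-,\rho_+] \leq H'(P)$) and the conservation identity \eqref{eqn:total-int} of Theorem~\ref{thm:kinetic} is bounded uniformly by $H'(P)\lambda$. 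The transition kernel sends $q$ to $\epsilon_{\rho_+} q$ with $\rho_+$ drawn from the normalized probability measure $H[\rho_n,\rho_+]f(t,\rho_n,d\rho_+) / \Lambda(t,q)$.

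The explicit time dependence of $\Lambda$ and the transition kernel is handled by the standard augmentation trick, viewing the augmented state $(t, q)$ as a time-homogeneous PDMP on $[0,\infty) \times Q$. The uniform bound $\Lambda \leq H'(P)\lambda$ forbids accumulation of jumps in finite time, and so rules out the explosion pathology that Davis must exclude; hence his strong Markov theorem applies to the augmented process, and projection back to $Q$ gives the strong Markov property of $\Phi_s^t q$ started at $(s,q)$. The only real point requiring attention, and hence the main (minor) obstacle, is the bookkeeping needed to separate the deterministic annihilations produced by $\phi$ (which are functions of the current configuration and change the particle count with no randomness) from the random insertions produced by $\zeta$-jumps, so that the PDMP data satisfies Davis's hypotheses on each stratum $Q_n$.
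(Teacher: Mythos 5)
Your proposal takes essentially the same route as the paper: both recognize $\Phi_s^t q$ as a piecewise-deterministic Markov process in the sense of Davis, augment the state by the time coordinate to absorb the explicit time dependence, take the Rankine--Hugoniot flow on each stratum $Q_n$ with jump rate $\int H[\rho_n,\rho_+]\,f(t,\rho_n,d\rho_+)$ into $Q_{n+1}$, and treat the particle annihilations as the boundary behavior that moves the process to a lower-dimensional stratum $Q_k$. The only small difference is one of bookkeeping that you yourself flag at the end: the paper cleanly designates annihilations as Davis's boundary jumps between strata rather than folding them into the flow, which is the cleaner way to verify Davis's hypotheses.
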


This assertion follows after recognizing $\Phi_s^t q$ as a
piecewise-deterministic Markov process described in some generality by
Davis \cite{Davis84}.  Namely, we augment the configuration space $Q$
to add the time parameter to each component $Q_n$, and then have a
deterministic flow according to the vector field
\begin{equation}
  (1; -H[\rho_0,\rho_1], \ldots, -H[\rho_{n-1},\rho_n]; 0, \ldots, 0).
\end{equation}
With rate $\int H[\rho_n,\rho_+] f(t,\rho_n,d\rho_+)$ we jump to the
indicated point in $Q_{n+1}$, and upon hitting a boundary we
transition to $Q_k$ for suitable $k < n$, annihilating particles in
the manner described above.

\subsection{Checking the candidate measure}
\label{sub:candidate}

Our goal is to take $q$ distributed according to the initial condition
and exactly describe the law of $\Phi_0^T q$ for each $T > 0$.  Using
the kinetic~\eqref{eqn:kinetic} and marginal
equations~\eqref{eqn:marginal}, we construct for each time $t \geq 0$
a candidate law $\mu(t,dq)$ on $Q$ as follows.  Take $N$ to be Poisson
with rate $\lambda L$, $x_1,\ldots,x_N$ uniform on $\Delta^L_N$, and
$\rho_0,\ldots,\rho_N$ distributed on $\overline{\Delta^P_{N+1}}$
according to the marginal $\ell$ and transitions $f$ independently of
the $x_i$:
\begin{equation}
  \label{eqn:candidate}
  e^{-\lambda L} \sum_{n=0}^\infty \delta_n(dN)\mu_n(t,dq),
\end{equation}
where $\mu_0(t,dq) = \ell(t,d\rho_0)$ and
\begin{equation}
  \mu_n(t,dq) =  \bOne_{\Delta^L_n}(x_1,\ldots,x_n) \, dx_1 \cdots dx_n \,
  \ell(t,d\rho_0) \prod_{j=1}^n f(t,\rho_{j-1},d\rho_j),
\end{equation}
for $n>0$. When we have verified \eqref{eqn:total-int}, it will be
immediate that the total mass of this is one.

We decompose the mapping from configurations $q$ to solutions (as
functions of $x$ over $[0,L]$) into the map from $q$ to the measure
\begin{equation}
  \label{eqn:rho-measure}
  \pi(q, dx) = \rho_0 \delta_0 + \sum_{i=1}^n (\rho_i - \rho_{i-1})
  \delta_{x_i} \qquad ( q \in Q_n)
\end{equation}
and integration over $[0,x]$.  We claim that when $q$ is distributed
according to $\mu(0,dq)$, the law of the measure $\pi(\Phi_0^t q,
\cdot)$ is identical to that of $\pi(q', \cdot)$ where $q'$ is
distributed according to $\mu(t,dq')$.

We now describe the structure of the proof of
Theorem~\ref{thm:bounded-sys}.  Fix some time $T > 0$ and consider
$F(t,q) = \bbE G(\Phi_t^T q)$ where $G$ takes the form of a Laplace
functional:
\begin{equation}
  \label{eqn:laplacefunctional}
  G(q) = \exp\left(-\int J(x) \, \pi(q,dx) \right) = \exp\left(-\rho_0
    J(0) - \sum_{i=1}^n (\rho_i - \rho_{i-1}) J(x_i)\right)
\end{equation}
for $J \geq 0$ a continuous function on $[0,L]$.  We aim to
show that
\begin{equation}
  \label{eqn:unchanging}
  \frac{d}{dt} \int \bbE G(\Phi_t^T q) \mu(t,dq) = 0
\end{equation}
for $0 < t < T$, from which it will follow that
\begin{equation}
  \label{eqn:GzeroT}
  \int \bbE G(\Phi_0^T q) \, \mu(0,dq) = \int G(q) \, \mu(T,dq)
\end{equation}
for all $G$ of the form in \eqref{eqn:laplacefunctional}.  Using the
standard fact that Laplace functionals completely determine the law of
any random measure \cite{Kallenberg02}, this will suffice to show that
law of $\pi(\Phi_0^T q, dx)$ for $q$ distributed as $\mu(0, dq)$ is
precisely the pushforward through $\pi$ of $\mu(T, dq)$, and obtain
the result.

We might verify \eqref{eqn:unchanging} by establishing regularity for
$F(t,q) = \bbE G(\Phi_t^T q)$ in $t$ and the $x$-components of $q$.
We speculate that it should be possible to do so if $J$ is smooth with
$J'(0) = 0$ and we content ourselves to divide the configuration space
into finitely many regions, each of which corresponds to a definite
order of deterministic collisions in $\phi$.  On the other hand, the
measures $\mu(t, dq)$ enjoy considerable regularity (uniformity, in
fact) in $x$, and we pursue instead an argument along these lines.
Here continuity of $F(t,q)$ in $t$ will suffice.

\begin{lemma}
  \label{lem:tcontinuity}
  Let $G$ take the form of \eqref{eqn:laplacefunctional}.  Then
  $F(t,q) = \bbE G(\Phi_t^T q)$ is a bounded function which is
  uniformly continuous in $t$ uniformly in $q$:
  \begin{equation}
    \label{eqn:cont-modulus}
    w(\delta) = \sup \{ |F(t,q) - F(s,q)| \, : \, s,t \in [0,T], \,
    |t-s| < \delta, \, q \in Q\} \to 0
  \end{equation}
  as $\delta \to 0+$.
\end{lemma}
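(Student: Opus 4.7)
The plan is to combine the strong Markov property of $\Phi$ with a coupling argument based on the $L^1$-contraction property of the scalar conservation law. Boundedness is immediate: since $J \geq 0$, $G(q) \in [0,1]$, hence $F(t,q) \in [0,1]$.

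For uniform continuity in $t$, fix $0 \leq s < t \leq T$ with $\delta = t-s$. By the strong Markov property of $\Phi$,
\begin{equation*}
  F(s,q) - F(t,q) = \bbE\bigl[F(t,\Phi_s^t q) - F(t,q)\bigr].
\end{equation*}
Let $A$ denote the event that no entry of $\zeta$ occurs in $(s,t]$. The total entry rate $\int H[\rho_-,\rho_+] f(\tau,\rho_-,d\rho_+)$ is bounded uniformly by $C_0 := H'(P)\lambda$ (using Assumption~\ref{asm:H} and the total integral conservation in Theorem~\ref{thm:kinetic}), so $\bbP(A^c) \leq C_0\delta$. On $A$ one has $\Phi_s^t q = \phi_s^t q$ (the deterministic $\delta$-flow), giving
\begin{equation*}
  |F(s,q) - F(t,q)| \leq 2 C_0 \delta + \sup_q |F(t, \phi_s^t q) - F(t, q)|,
\end{equation*}
which reduces the problem to bounding the last term.

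The key observation for the remaining term is that $\phi$ preserves the rightmost $\rho$-value (annihilations remove only middle components, and exits at $x=0$ remove only $\rho_0$), so $q$ and $\phi_s^t q$ share the same rightmost value $\rho_n$. I would couple $\Phi_t^T q$ and $\Phi_t^T \phi_s^t q$ by thinning a common Poisson process of rate $C_0$ with shared proposal and accept/reject marks. Since the kernel $H[\rho_-,\rho_+] f(\tau,\rho_-,d\rho_+)$ depends on the state only through the current rightmost value, the coupled processes accept identical sequences of entry times $(\tau_k)$ and values $(\rho_{n+k})$; synchronization of the rightmost value is preserved at each entry by induction. Under this coupling the two configurations correspond to entropy solutions $\rho_1, \rho_2$ on $[0,L] \times [t,T]$ of the same scalar conservation law with the same boundary behavior at $x=L$. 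By Kruzhkov $L^1$-contraction, applied on each interval between successive entries (and noting that the common addition of mass at $x=L$ at each entry leaves the $L^1$ distance unchanged),
\begin{equation*}
  \|\rho_1(\cdot,T) - \rho_2(\cdot,T)\|_{L^1(0,L)} \leq \|\rho_1(\cdot,t) - \rho_2(\cdot,t)\|_{L^1(0,L)} \leq PC'\delta,
\end{equation*}
where $C' := H'(P)$ bounds the particle speeds and the final estimate follows because each jump of $\rho_1$ is shifted by at most $C'\delta$ relative to the corresponding jump of $\rho_2$, contributing at most (jump size)$\cdot C'\delta$ to the $L^1$ distance and summing to at most $P\cdot C'\delta$.

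Finally I convert the $L^1$ bound on the densities into a bound on the Laplace functional. Mollifying $J$ yields Lipschitz approximants $J_\epsilon$ with $\|J - J_\epsilon\|_\infty \leq \omega_J(\epsilon)$ and $\mathrm{Lip}(J_\epsilon) \leq C_1/\epsilon$. Using $|e^{-a}-e^{-b}| \leq |a-b|$ for $a,b \geq 0$ together with a Lipschitz--$L^1$ pairing,
\begin{equation*}
  |G(\Phi_t^T \phi_s^t q) - G(\Phi_t^T q)| \leq \mathrm{Lip}(J_\epsilon) \cdot PC'\delta + 2P\omega_J(\epsilon).
\end{equation*}
Taking expectations and choosing $\epsilon = \sqrt{\delta}$ yields $w(\delta) \leq 2C_0\delta + C_1 PC'\sqrt{\delta} + 2P\omega_J(\sqrt{\delta}) \to 0$ as $\delta \to 0$. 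The main technical point is the $L^1$-contraction step on the bounded domain with coupled boundary entries: this can be verified either directly from the particle picture (sticky/annihilating particle dynamics are $L^1$-nonexpansive when driven by matched boundary data) or by invoking Kruzhkov theory for the initial-boundary value problem with matched Dirichlet data at $x=L$ (the $x=0$ boundary being outflow by Assumption~\ref{asm:H}).
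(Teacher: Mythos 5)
Your proposal is correct in its main ideas, but it follows a genuinely different route from the paper, and the differences are worth spelling out.

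The paper separates the randomness by writing $G(\Phi_s^T q) = G(\Phi_{T-\theta}^T \Phi_s^{T-\theta} q)$ with $\theta = t-s$, peeling off a terminal interval of length $\theta$ where entries are unlikely, and then compares $\Phi_s^{T-\theta} q$ with $\Phi_t^T q$: two evolutions of the \emph{same} configuration over time intervals of the same length $T-t$ but with boundary entry rates read off at time-shifted arguments of $f$. The coupling there is built with the total-variation minimum $\hat f = f(s+r,\cdot,\cdot) \wedge f(t+r,\cdot,\cdot)$ (the explicit generator $\cL^z$ in \eqref{eqn:coupledentry}), and the crucial quantitative input is TV-continuity of $f$ in time, giving exact agreement of the two random evolutions with probability $\geq \exp(-C_{\lambda,H'(P)}T\theta)$.

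You instead peel off a leading interval $(s,t]$, reducing to $|F(t,\phi_s^t q) - F(t,q)|$, and then compare two evolutions over the \emph{same} time window $[t,T]$ but from \emph{different} initial configurations $\phi_s^t q$ and $q$. Your key structural observation is that the entry kernel depends on the state only through the rightmost component, which the deterministic flow preserves; hence the boundary processes for the two evolutions can be coupled to be identical almost surely, with no TV argument at all. You then close by Kruzhkov $L^1$-contraction on the bounded domain (with matched Dirichlet data at $x=L$ and outflow at $x=0$) and convert the $L^1$ bound on $\rho$ to a bound on the Laplace functional by mollifying $J$. This is a clean and elegant alternative, and it trades the paper's $\cL^z$-coupling and TV-estimates for two pieces of infrastructure the paper does not use: $L^1$-nonexpansiveness of the initial-boundary value problem, and a Lipschitz--$L^1$ pairing argument.

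Two small points you should make explicit if you write this out. First, the Lipschitz--$L^1$ pairing
$|\int J_\epsilon \, d R_1 - \int J_\epsilon \, d R_2| \leq \mathrm{Lip}(J_\epsilon)\|R_1 - R_2\|_{L^1}$
hides an integration by parts that produces the boundary term $J_\epsilon(L)(R_1(L) - R_2(L))$; this vanishes precisely because your coupling keeps the rightmost values synchronized at all times, and that synchronization should be stated rather than left implicit. Second, the $L^1$-contraction step on $[0,L]$ is not literally Kruzhkov's whole-line theorem; the sign analysis of the boundary flux (nonnegative because $H$ is nondecreasing on $[0,P]$ under Assumption~\ref{asm:H}, and $\rho_1(L) = \rho_2(L)$) or, as you note, the sticky-particle picture should be invoked explicitly. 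Neither is a serious gap, and either can be filled in a few lines, so the proof stands as a valid alternative.
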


To differentiate in \eqref{eqn:unchanging} we will need to compare
$\bbE G(\Phi_s^T q)$ and $\bbE G(\Phi_t^T q)$ for $0 < s < t < T$.
Our next observation is that when the $t-s$ is small, we can separate
the deterministic and stochastic portions of the dynamics over the
time interval $[s,t]$.  The idea is that in an interval of time
$[s,t]$, the probability of multiple particle entries is $O((t-s)^2)$,
and a single particle entry has probability $O(t-s)$ which permits
additional $o(1)$ errors.

\begin{lemma}
  \label{lem:separate-random}
  Let $0 < s < t \leq T$ and $q \in Q_n$.  There exist a random
  variable $\tau \in (s,t)$ a.s., with law depending on $s,t,$ and $q$
  only through $\rho_n$, and a constant $C_{\lambda,H'(P)}$
  independent of $q,s,t$ so that
  \begin{multline}
    \left|F(s,q) - F(t,\phi_s^t q) - (t-s) \int [\bbE F(t,
      \epsilon_{\rho_+} \phi_s^\tau q) - F(t,q)] H[\rho_n,\rho_+]
      f(t,\rho_n,d\rho_+)\right| \\
    \leq C_{\lambda,H'(P)} [(t-s)^2 + (t-s) w(t-s)].
  \end{multline}
\end{lemma}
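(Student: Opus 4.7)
The plan is to combine the strong Markov property for $\Phi$ with a case analysis based on the number $K$ of random boundary entries occurring in $(s,t)$. Starting from the identity $F(s,q) = \bbE F(t, \Phi_s^t q)$, I would note that the instantaneous entry rate at a configuration with rightmost height $\rho_n$ at time $\theta$ is $\Lambda(\theta,\rho_n) = \int H[\rho_n,\rho_+] f(\theta,\rho_n,d\rho_+)$, which is bounded uniformly by $\lambda H'(P)$ (using Assumption~\ref{asm:H} and the conservation of total mass for $f$ from Theorem~\ref{thm:kinetic}). This is the source of the constant $C_{\lambda,H'(P)}$, and it gives $\bbP(K\geq 2) = O((t-s)^2)$, $\bbP(K=0) = 1-(t-s)\Lambda(t,\rho_n) + O((t-s)^2)$, and $\bbP(K=1) = (t-s)\Lambda(t,\rho_n) + O((t-s)^2)$.

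I would then split $\bbE F(t,\Phi_s^t q)$ according to $K$. On $\{K=0\}$ the evolution is purely deterministic, $\Phi_s^t q = \phi_s^t q$, contributing $\bbP(K=0)\,F(t,\phi_s^t q)$. On $\{K\geq 2\}$ the trivial bound $|F|\leq 1$ produces an $O((t-s)^2)$ error. On $\{K=1\}$, with the unique jump occurring at time $\tau$ with height $\rho_+$, one has $\Phi_s^t q = \phi_\tau^t \epsilon_{\rho_+} \phi_s^\tau q$; taking $\tau$ to be the first-jump time conditional on $K\geq 1$ (whose law depends on $s,t,q$ only through $\rho_n$, as needed), and using continuity of $f$ in $t$ to replace $f(\tau,\rho_n,d\rho_+)$ by $f(t,\rho_n,d\rho_+)$ up to a vanishing-in-$(t-s)$ error, the $\{K=1\}$ contribution becomes
\begin{equation*}
  (t-s)\int \bbE\bigl[F(t,\phi_\tau^t \epsilon_{\rho_+}\phi_s^\tau q)\bigr]\, H[\rho_n,\rho_+]\, f(t,\rho_n,d\rho_+) + O((t-s)^2 + (t-s) w(t-s)).
\end{equation*}

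The critical remaining step is to replace $F(t,\phi_\tau^t \epsilon_{\rho_+}\phi_s^\tau q)$ by $F(t,\epsilon_{\rho_+}\phi_s^\tau q)$ inside this integral. I would do this by routing the comparison through the time variable: by the strong Markov property, $F(\tau,\epsilon_{\rho_+}\phi_s^\tau q)$ equals the expectation of $G\circ\Phi_t^T\Phi_\tau^t\epsilon_{\rho_+}\phi_s^\tau q$, and conditioning on no further entries in $(\tau,t)$ (probability $1-O(t-s)$) shows $F(\tau,\epsilon_{\rho_+}\phi_s^\tau q) = F(t,\phi_\tau^t\epsilon_{\rho_+}\phi_s^\tau q) + O(t-s)$; Lemma~\ref{lem:tcontinuity} then converts the time argument on the left and yields $F(t,\phi_\tau^t\epsilon_{\rho_+}\phi_s^\tau q) = F(t,\epsilon_{\rho_+}\phi_s^\tau q) + O(w(t-s) + (t-s))$. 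Assembling the three cases produces the lemma's target expression, except with $F(t,\phi_s^t q)$ subtracted inside the integral in place of $F(t,q)$; to close this gap I would bound $|F(t,\phi_s^t q) - F(t,q)| = O(t-s + w(t-s))$ by combining the preliminary form just derived (which already gives $|F(s,q) - F(t,\phi_s^t q)| = O(t-s)$) with Lemma~\ref{lem:tcontinuity} (which gives $|F(s,q) - F(t,q)| = O(w(t-s))$), after which multiplication by the factor $(t-s)\Lambda(t,\rho_n)$ keeps the remainder within the stated error tolerance.

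The main obstacle is that there is no direct continuity estimate available for $F$ in the $q$-variable, so one cannot compare $F(t,\phi_\tau^t \epsilon_{\rho_+} \phi_s^\tau q)$ with $F(t,\epsilon_{\rho_+}\phi_s^\tau q)$ by quantifying the effect of the short deterministic flow $\phi_\tau^t$ directly. The key idea that resolves this is to convert the unwanted deterministic flow into a shift of \emph{time} via the strong Markov property, and then pay for it with the time-modulus $w$ furnished by Lemma~\ref{lem:tcontinuity}. The remainder is essentially careful bookkeeping of Poisson-type error terms and of the $t$-continuity of $f$ built into Definition~\ref{def:kinetic-solution}.
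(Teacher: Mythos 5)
Your proposal takes essentially the same route as the paper's proof: both write $F(s,q) = \bbE F(t,\Phi_s^t q)$, decompose according to the number of boundary entries in $(s,t)$, and convert the unwanted post-jump evolution into a shift of the \emph{time} variable via the strong Markov property at $\tau$, paying for it with the modulus $w$ from Lemma~\ref{lem:tcontinuity}. The minor differences are that the paper conditions on the event of \emph{at least} one entry rather than exactly one (which lets the strong Markov property directly produce $F(\tau,\epsilon_{\rho_+}\phi_s^\tau q)$ without a separate removal of $\phi_\tau^t$ or a $K\geq 2$ case), and it leaves implicit the final $O((t-s)^2 + (t-s)w(t-s))$ replacement of $F(t,\phi_s^t q)$ by $F(t,q)$ that you make explicit.
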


We proceed to an analysis of the deterministic portion of the flow,
$\phi_s^t q$, where $q$ is distributed according to $\mu(t,dq)$.  For
this we introduce some notation.  Given $q \in Q_n$, we write
\begin{equation}
  \sigma(q) = \min \{r \geq s \, : \phi_s^r q \in (\partial \Delta^L_n)
  \times \overline{\Delta^P_{n+1}}\}.
\end{equation}
For each fixed $n$, we consider several subsets of the set of
$n$-particle configurations $Q_n$.  In particular we need to separate
those configurations which experience an exit at $x = 0$ or a
collision in a time interval shorter than $t-s$.  For $i =
0,\ldots,n-1$, write: 
\begin{equation}
  \begin{aligned}
    A_i &= \{q \in Q_n : \sigma(q) \leq t \text{ and }
    \phi_s^{\sigma(q)} q \in (\partial_i \Delta^L_n) \times
    \overline{\Delta^P_{n+1}}\} \\
    U &= Q_n \setminus \bigcup_{i=0}^{n-1} A_i \\
    B &= \{q \in Q_n : x_n \geq L - (t-s) H[\rho_{n-1},\rho_n]\} \\
    V &= Q_n \setminus B.
  \end{aligned}
\end{equation}
Figure~\ref{fig:simplex} illustrates these sets in the case $n = 2$.
We observe in particular that $\phi_s^t U$ and $V$ differ by a
$\mu_n(t,\cdot)$-null set, and that the terms associated with the
boundary faces are related to configurations with one fewer particle.

\begin{lemma}
  \label{lem:boundary-approx}
  For each positive integer $n$ and with errors bounded by
  \begin{equation}
    C_{L,n,\lambda,H'(P)} [(t-s) + w(t-s)]
  \end{equation}
  we have the following approximations: 
  \begin{align}
    &(t-s)^{-1} \int F(t,\phi_s^t q) \bOne_{A_0}(q) \mu_n(t,dq)
    \notag \\
    &\quad \approx \int F(t,q) \frac{H[\rho_*,\rho_0]
      \ell(t,d\rho_*) f(t,\rho_*,d\rho_0)}{\ell(t,d\rho_0)}
    \mu_{n-1}(t,dq), \\
    &(t-s)^{-1} \int F(t,\phi_s^t q) \bOne_{A_i}(q) \mu_n(t,dq)
    \notag \\
    &\quad \approx \int F(t,q) \frac{(H[\rho_*,\rho_i] -
      H[\rho_{i-1},\rho_*]) f(t,\rho_{i-1},d\rho_*)
      f(t,\rho_*,d\rho_i)}{f(t,\rho_{i-1},d\rho_i)} \mu_{n-1}(t,dq)
    \\
    \intertext{for $i = 1, \ldots, n-1$, and}%
    &(t-s)^{-1} \int F(t,\phi_s^t q) \bOne_B(q) \mu_n(t,dq)
    \notag \\
    &\quad \approx \int F(t,\epsilon_{\rho_+} q)
    H[\rho_{n-1},\rho_+] f(t,\rho_{n-1},d\rho_+) \mu_{n-1}(t,dq).
  \end{align}
\end{lemma}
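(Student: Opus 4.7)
The plan is to combine a uniform continuity estimate for $F$ under the deterministic flow with explicit, case-by-case changes of variable on the simplex.

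First I derive the a priori bound
\[
|F(t,\phi_s^t q) - F(t,q)| \leq C[(t-s) + w(t-s)]
\]
uniformly in $q \in Q$, by subtracting the time-continuity bound of Lemma~\ref{lem:tcontinuity} from the identity of Lemma~\ref{lem:separate-random}, and using that the integral term in Lemma~\ref{lem:separate-random} is $O(t-s)$ (because $F$ is bounded and the entry rate $\int H[\rho_n,\rho_+] f(t,\rho_n,d\rho_+)$ is uniformly bounded by $H'(P)\lambda$, by Assumption~\ref{asm:H} and Theorem~\ref{thm:kinetic}).

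With this estimate in hand, for each event set I extract a factor of $(t-s)$ times a velocity coefficient via a change of variables, and use the estimate to reduce to an $(n-1)$-particle configuration $q' \in Q_{n-1}$. On $A_0$, the exit time satisfies $\sigma-s = x_1/H[\rho_0,\rho_1]$, so integrating $x_1$ over $[0,(t-s)H[\rho_0,\rho_1]]$ produces the factor $(t-s)H[\rho_0,\rho_1]$; because the $(n-1)$ remaining particles have velocities unaffected by the exit, one may set $q' = (x_2,\ldots,x_n;\rho_1,\ldots,\rho_n)$ and observe $\phi_s^t q = \phi_s^t q'$ exactly. The height factor $\ell(t,d\rho_0) f(t,\rho_0,d\rho_1)$ then rewrites as $\frac{\ell(t,d\rho_*)f(t,\rho_*,d\rho_0)}{\ell(t,d\rho_0)}\ell(t,d\rho_0)$ after the relabeling $\rho_* \leftrightarrow \rho_0^{\text{old}}$, $\rho_0^{\text{new}} \leftrightarrow \rho_1^{\text{old}}$. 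For $A_i$ with $1 \leq i \leq n-1$, the change of variables from $(x_i, x_{i+1})$ to $(y_i, \sigma-s)$, with $y_i$ chosen so that $\phi_s^\sigma q'$ agrees with the post-collision configuration, has Jacobian equal to the convexity-positive velocity difference $H[\rho_i,\rho_{i+1}] - H[\rho_{i-1},\rho_i]$; the two adjacent $f$-kernels are rewritten as $\frac{f(t,\rho_{i-1},d\rho_*) f(t,\rho_*,d\rho_i)}{f(t,\rho_{i-1},d\rho_i)}\cdot f(t,\rho_{i-1},d\rho_i)$ under the analogous relabeling. For $B$, the change of variable $u := (L-x_n)/H[\rho_{n-1},\rho_n]$, $u \in (0,t-s)$, produces $(t-s)H[\rho_{n-1},\rho_n]$, and evaluating the integrand at $u=0$ matches $F(t,\phi_s^t\epsilon_{\rho_+} q')$ with $\rho_+ := \rho_n$ and $q' := (x_1,\ldots,x_{n-1};\rho_0,\ldots,\rho_{n-1})$; the uniform bound above then replaces this with $F(t,\epsilon_{\rho_+} q')$, and the relabeling $\rho_n \leftrightarrow \rho_+$ completes the identification.

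The main technical obstacle is the error control from the various approximations. The exact event sets $A_i$, $B$ differ from their linearizations on $\mu_n$-sets of measure $O((t-s)^2)$ (configurations experiencing two events in a window of width $t-s$), contributing $O((t-s)^2)$ to each integral by boundedness of $F$. The change of variables for $A_i$ with $i \ge 1$ and for $B$ also introduces new integration variables differing from the originals by $O(t-s)$, shifting the domain of integration and requiring control of $F$ at shifted positions; this is the most delicate point, and must be handled either by a position-continuity bound for $F$ obtained through coupling of the $\Phi$-dynamics (using continuity of $J$) or by a further application of the uniform bound above along an auxiliary flow. After dividing by $t-s$, all errors fit within the stated $C_{L,n,\lambda,H'(P)}[(t-s)+w(t-s)]$, the $L,n$-dependence tracking $|\Delta^L_n| = L^n/n!$ and the $n$ combinatorial possibilities for the event.
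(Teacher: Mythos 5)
Your overall strategy---first establish a uniform bound for $F$ along the deterministic flow, then treat each boundary face by a change of variables extracting a factor of $(t-s)$ times the relative velocity---matches the paper's. Your derivation of $|F(t,\phi_s^t q) - F(t,q)| \le C[(t-s) + w(t-s)]$ by combining Lemmas~\ref{lem:tcontinuity} and~\ref{lem:separate-random} is a valid (and somewhat tidier) route to the same estimate the paper obtains in-line from a two-term decomposition into ``no entries on $(s,\sigma)$'' plus ``Lemma~\ref{lem:tcontinuity} on a length-$(t-s)$ interval.'' Your treatment of $A_0$ and $A_i$ works: the observation that $\phi_s^t q = \phi_s^t q'$ \emph{exactly} (with your choice of $y_i$, the merged particle in $q'$ stays strictly between particles $i$ and $i+1$ of $q$ on $[s,\sigma)$, so no new collisions occur) is correct, and one then removes the flow via the uniform estimate. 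The paper does this slightly differently, replacing $F(t,\phi_s^t q)$ by $F(t,\phi_s^\sigma q)$ and parametrizing the simplex boundary directly by positions at time $\sigma$, but both routes yield the stated approximations for $A_0, A_1, \dots, A_{n-1}$.

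The $B$ case is where there is a genuine gap. Your parametrization $u = (L - x_n)/H[\rho_{n-1},\rho_n]$ holds $x_1,\ldots,x_{n-1}$ fixed and moves only $x_n$. Consequently, the two configurations you need to compare, $q = (x_1,\ldots,x_{n-1},L-uH;\ldots)$ and $\epsilon_{\rho_n} q' = (x_1,\ldots,x_{n-1},L;\ldots)$, differ in a \emph{single} coordinate and are \emph{not} related by the deterministic flow $\phi$, so your uniform flow bound does not apply to the difference $|F(t,\phi_s^t q) - F(t,\epsilon_{\rho_n} q')|$. You acknowledge this (``the most delicate point'') and propose either a position-continuity bound for $F$ or an auxiliary flow, but the former is not readily available: the map $q \mapsto F(t,q) = \bbE G(\Phi_t^T q)$ is not uniformly continuous in a single $x$-coordinate in any simple sense, since perturbing one position can change the order of subsequent collisions, and establishing such a bound via $L^1$-contraction would require a separate argument with a different modulus of continuity. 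The paper sidesteps this by parametrizing $B$, modulo lower-dimensional sets, as the range of $\phi_s^{s+\theta}\tilde q$ with $\tilde q = (x_1,\ldots,x_{n-1},L;\rho_0,\ldots,\rho_n)$, $\theta \in [0,t-s]$: \emph{all} coordinates are shifted consistently with the flow, the Jacobian factor is again $|v_n| = H[\rho_{n-1},\rho_n]$, and the replacement of $F(t,\phi_s^t q)$ by $F(t,\tilde q)$ is a flow comparison to which Lemma~\ref{lem:tcontinuity} and the entry-rate bound (equivalently, your uniform flow estimate) apply directly. To close your argument for $B$ you should adopt this flow-based parametrization rather than moving $x_n$ alone.
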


\begin{remark}
  It is \emph{essential} that the integral over $B$ can be
  approximated by an integral over
  $(x_1,\ldots,x_{n-1},L;\rho_0,\ldots,\rho_{n-1},\rho_+)$.  The
  measure $\cL^* \mu$ below does not have any singular factors like
  $\delta(x_n = L)$.  In particular, the result of integrating over
  $B$ will partially cancel with the term arising from random particle
  entries.
\end{remark}

\begin{figure}[h]
  \begin{center}
    \includegraphics[width=0.6\textwidth]{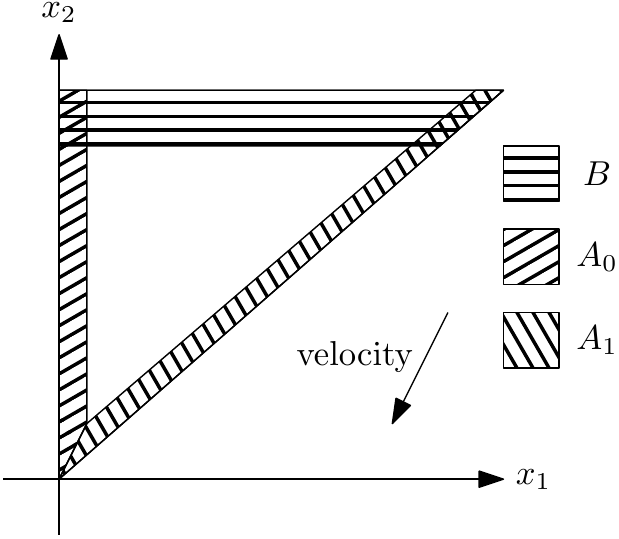}
    \caption{The deterministic flow $\phi$ on the $x$-simplex is
      translation at constant velocity unless this translation crosses
      a boundary of $\Delta^L_n$.  In the case $n = 2$ pictured above,
      points in $A_0$ and $A_1$ hit the boundary faces $x_1 = 0$ and
      $x_1 = x_2$, respectively.  The remaining portion of the simplex
      is mapped to the simplex minus the set $B$.}
    \label{fig:simplex}
  \end{center}
\end{figure}

The final ingredient for the proof of Theorem~\ref{thm:bounded-sys} is
the time derivative of $\mu(t,dq)$, which we now record.

\begin{lemma}
  \label{lem:diffmu}
  For any $n \geq 0$ and any $0 \leq s < t$ we have 
  \begin{equation}
    \|\mu_n(t, \cdot) - \mu_n(s, \cdot) - (t-s) (\cL^*
    \mu_n)(t,\cdot)\|_{\TV} = o(t-s) 
  \end{equation}
  where the norm is total variation and $(\cL^* \mu_n)(t,dq)$ is
  defined to be the signed kernel
  \begin{multline}
    \label{eqn:Lstarmu}
    \left[\frac{(\cL^0 \ell)(t, d\rho_0)}{\ell(t, d\rho_0)} +
      \sum_{i=1}^n \frac{(\cL^{\kappa} f)(t, \rho_{i-1},
        d\rho_i)}{f(t, \rho_{i-1}, d\rho_i)}\right] \\
    \times e^{-\lambda L} \bOne_{\Delta^L_n}(x_1,\ldots,x_n) \, dx_1
    \cdots dx_n \, \ell(t,d\rho_0) \prod_{j=1}^n f(t, \rho_{j-1},
    d\rho_j).
  \end{multline}
\end{lemma}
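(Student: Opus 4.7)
The plan is to interpret \eqref{eqn:Lstarmu} as the output of the Leibniz rule applied to the product structure of $\mu_n(t,\cdot)$. Define
\begin{equation*}
A_0(t) = \ell(t,\cdot) \in \cM, \qquad A_j(t) = f(t,\rho_{j-1},\cdot) \in \cK \quad (j = 1,\ldots,n),
\end{equation*}
so that $\mu_n(t,dq)$ factors as the product of the time-independent uniform density $e^{-\lambda L}\bOne_{\Delta^L_n}(x_1,\ldots,x_n)\,dx_1\cdots dx_n$ on the $x$-simplex with the iterated kernel on $\overline{\Delta^P_{n+1}}$ given by $\ell(t,d\rho_0)f(t,\rho_0,d\rho_1)\cdots f(t,\rho_{n-1},d\rho_n)$. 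The ``ratios'' appearing in \eqref{eqn:Lstarmu} are merely bookkeeping: multiplied against the product, they recover the sum of $n+1$ terms in which exactly one factor $A_j$ has been replaced by its formal time derivative $\cL^0\ell$ (if $j=0$) or $\cL^\kappa f$ (if $j\ge 1$).

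With that reformulation the proof is a telescoping identity. Writing
\begin{equation*}
A_0(t)\cdots A_n(t) - A_0(s)\cdots A_n(s) = \sum_{j=0}^n A_0(t)\cdots A_{j-1}(t)\,[A_j(t)-A_j(s)]\,A_{j+1}(s)\cdots A_n(s),
\end{equation*}
I would treat each summand separately. For $j=0$, the marginal equation of Definition~\ref{def:marginal-solution} together with continuity of $\theta \mapsto \cL^0\ell(\theta,\cdot) \in \cM$ at $\theta=t$ gives
\begin{equation*}
\bigl\|A_0(t) - A_0(s) - (t-s)\cL^0\ell(t,\cdot)\bigr\|_{\TV}
= \Bigl\|\int_s^t [\cL^0\ell(\theta,\cdot) - \cL^0\ell(t,\cdot)]\,d\theta\Bigr\|_{\TV}
= o(t-s),
\end{equation*}
and the kinetic equation yields the analogous estimate $\|A_j(t)-A_j(s)-(t-s)\cL^\kappa f(t,\rho_{j-1},\cdot)\| = o(t-s)$ for $j\ge 1$. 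The remaining $A_i(s)$ factors are then exchanged for $A_i(t)$ at the cost of an additional $o(1)$ per factor by continuity of $\ell$ and $f$ in $t$; since they multiply a prefactor of order $t-s$, these exchanges contribute only $o(t-s)$ to the error.

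To pass from pointwise bounds on each factor to a total variation bound on the full product, I would use the submultiplicativity estimate $\|m \cdot k\|_{\TV} \leq \|m\|_{\TV}\|k\|$ for $m \in \cM$, $k \in \cK$, iterated $n$ times, together with the uniform bounds $\|\ell(\theta,\cdot)\|_{\TV} = 1$ and $\|f(\theta,\cdot,\cdot)\| = \lambda$ supplied by Theorem~\ref{thm:kinetic}. The uniform density on $\Delta^L_n$ contributes an extra factor $L^n/n!$ that is independent of $t$. The main obstacle is the continuity in $t$ of the right-hand sides $\cL^0\ell$ and $\cL^\kappa f$ in $\cM$ and $\cK$ respectively, which is needed to justify the $o(t-s)$ vanishing of the integral remainders above; although this is not stated directly in Definitions~\ref{def:kinetic-solution} and~\ref{def:marginal-solution}, it follows from the assumed continuity of $\ell$ and $f$ themselves once one observes that the formulas \eqref{eqn:kinetic} and \eqref{eqn:marginal} are jointly continuous as bilinear or trilinear operations in $\ell$ and $f$ under the submultiplicative norms on $\cM$ and $\cK$.
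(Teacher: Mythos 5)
Your proposal is correct and follows essentially the same route as the paper: both arguments prove a Leibniz rule for the $\rho$-factors of $\mu_n$, linearizing each $\ell$ and $f$ factor and propagating the errors through the product via submultiplicativity of the kernel and TV norms. The only structural difference is bookkeeping: you telescope across all $n+1$ factors in one sum, whereas the paper peels off the final factor $f(\cdot,\rho_n,d\rho_{n+1})$ and inducts on $n$; unrolling the induction produces exactly your telescoping identity. The paper also pushes for explicit $O((t-s)^2)$ bounds on the linearization errors (by writing out $\cL^\kappa f(\tau)-\cL^\kappa f(t)$ in terms of $\int_t^\tau \cL^\kappa f$ and using the local Lipschitz estimate on $\cL^\kappa$), whereas you settle for the $o(t-s)$ that follows from continuity of $\theta\mapsto\cL^\kappa f(\theta,\cdot,\cdot)$ and $\theta\mapsto\cL^0\ell(\theta,\cdot)$. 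Your observation that this continuity, while not written into Definitions~\ref{def:kinetic-solution} and~\ref{def:marginal-solution}, follows from continuity of $f$ and $\ell$ together with the boundedness of $\cL^\kappa$ and $\cL^0$ on bounded sets, is correct and closes the one gap your softer argument could have left.
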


The expression for the measure above is to be understood formally; the
correct interpretation involves replacement, not division.  All of the
``divisors'' above are present as factors of $\ell(t, d\rho_0)
\prod_{j=1}^n f(t,\rho_{j-1}, d\rho_j)$, and the fractions indicate
that the appearance of the denominator in this portion is to be
replaced with the indicated numerator.

So that we know what to expect, before proceeding we note that when we
sum over $i$ in \eqref{eqn:Lstarmu}, some of the terms arising from
$\cL^0$ and $\cL^{\kappa}$ cancel.  Namely, the bracketed portion of
\eqref{eqn:Lstarmu} expands as
\begin{equation}
  \begin{aligned}
    &\frac{\int H[\rho_*,\rho_0] \ell(t, d\rho_*) f(t,\rho_*,
      d\rho_0)}{\ell(t, d\rho_0)} - \int H[\rho_0,\rho_*]
    f(t,\rho_0, d\rho_*) \\
    &+ \sum_{i=1}^n \left[\frac{\int (H[\rho_*,\rho_i] -
        H[\rho_{i-1},\rho_*]) f(t,\rho_{i-1}, d\rho_*)
        f(t,\rho_*, d\rho_i)}{f(t,\rho_{i-1}, d\rho_i)} \right. \\
    &\qquad \qquad \left. - \int (H[\rho_i,\rho_*] f(t,\rho_i,
      d\rho_*) - H[\rho_{i-1},\rho_*] f(t,\rho_{i-1}, d\rho_*))
    \right]
  \end{aligned}
\end{equation}
The gain terms associated with the kinetic equations we leave as they
are, but note that the ``loss'' terms telescope, and the above may be
shortened to
\begin{multline}
  \label{eqn:Lstarmu-short}
  \frac{\int H[\rho_*,\rho_0] \ell(t, d\rho_*) f(t,\rho_*,
    d\rho_0)}{\ell(t, d\rho_0)} - \int H[\rho_n,\rho_*]
  f(t,\rho_n,d\rho_*) \\
  + \sum_{i=1}^n \frac{\int (H[\rho_*,\rho_i] - H[\rho_{i-1},\rho_*])
    f(t,\rho_{i-1},d\rho_*)
    f(t,\rho_*,d\rho_i)}{f(t,\rho_{i-1},d\rho_i)}. 
\end{multline}
We are ready to prove our statistical characterization of the bounded
system.

\begin{proof}[Proof of Theorem~\ref{thm:bounded-sys}]
  For times $s$ and $t$ with $0 \leq s < t \leq T$, consider the
  difference $\int F(t,q) \, \mu(t,dq) - \int F(s,q) \, \mu(s,dq)$:
  \begin{multline}
    \label{eqn:differenceIandII}
     \int [F(t,q) - F(s,q)] \, \mu(t, dq) + \int F(t,q) [\mu(t, dq) -
     \mu(s, dq)] \\
    - \int [F(t,q) - F(s,q)] [\mu(t,dq) - \mu(s,dq)] = \mathrm{(I) +
      (II) + (III)}
  \end{multline}
  Since $F(t,q)$ is uniformly continuous in $t$ uniformly in $q$ by
  Lemma~\ref{lem:tcontinuity} and $\mu$ is a probability measure, (I)
  $\to 0$ as $t-s \to 0$.  Using Lemma~\ref{lem:diffmu} and the fact
  that $|F| \leq 1$, (II) $\to 0$ as $t-s \to 0$, and in fact 
  \begin{equation}
    \label{eqn:diffmuF}
    \int F(t,q) \frac{\mu(t,dq) - \mu(s,dq)}{t-s} \to \int F(t,q)(\cL^*
    \mu)(t,dq) 
  \end{equation}
  as $s \to t-$ with $t$ fixed.  Using both
  Lemmas~\ref{lem:tcontinuity} and \ref{lem:diffmu}, we see (III) is
  $o(t-s)$.  Thus $\int F(t,q) \, \mu(t,dq)$ is continuous in $t$;
  we will show additionally that it is differentiable from below in
  $t$ with one-sided derivative equal to $0$ for all $0 < t < T$.  In
  light of \eqref{eqn:diffmuF}, our task is to show that $-\int F(t,q)
  \, (\cL^* \mu)(t,dq)$ approximates (I) up to an $o(t-s)$ error.

  Using Lemma~\ref{lem:separate-random} we have the following
  approximation of the portion of (I) involving $\mu_n$, with error
  bounded by $C_{\lambda,H'(P)}[(t-s)^2 + (t-s) w(t-s)]$:
  \begin{equation}
    \begin{aligned}
      &\int \left[ F(t,q) (\bOne_B(q) + \bOne_V(q)) - F(t,\phi_s^t q)
        \left(\bOne_U(q) + \sum_{i=0}^{n-1} \bOne_{A_i}(q)\right)
      \right.\\
      &\qquad \qquad + (t-s) F(t, q) \int H[\rho_n,\rho_+]
      f(t,\rho_n,d\rho_+) \\
      &\qquad\qquad \qquad \left. - (t-s) \int \bbE F(t,
        \epsilon_{\rho_+} \phi_s^\tau q) H[\rho_n,\rho_+]
        f(t,\rho_n,d\rho_+) \vphantom{\sum_{i=0}^{n-1}} \right]
      \mu_n(t,dq).
    \end{aligned}
  \end{equation}
  We have $\int [F(t,q) \bOne_V(q) - F(t,\phi_s^t q) \bOne_U(q)] \,
  \mu_n(t,dq) = 0$ because the deterministic flow $\phi_s^t$ maps $U$
  to $V$ (modulo lower-dimensional sets) and preserve the Lebesgue
  measure in spatial coordinates.  Making replacements using
  Lemma~\ref{lem:boundary-approx} and reordering the terms, we find
  with an error bounded by $C_{L,n,\lambda,H'(P)}[(t-s) + w(t-s)]$
  that
  \begin{equation}
    \label{eqn:approx-diff-n}
    \begin{aligned}
      \int &\frac{F(t,q) - F(s,q)}{t-s} \mu_n(t,dq)  \\
      \approx&\int F(t,\epsilon_{\rho_+} q) H[\rho_{n-1},\rho_+]
      f(t,\rho_{n-1},d\rho_+) \mu_{n-1}(t,dq) \\
      & - \int \bbE F(t, \epsilon_{\rho_+} \phi_s^\tau q)
      H[\rho_n,\rho_+] f(t,\rho_n,d\rho_+) \mu_n(t,dq) \\
      &- \left[\int F(t,q) \left(\sum_{i=1}^{n-1}
          \frac{(H[\rho_*,\rho_i] - H[\rho_{i-1},\rho_*])
            f(t,\rho_{i-1},d\rho_*)
            f(t,\rho_*,d\rho_i)}{f(t,\rho_{i-1},d\rho_i)}\right.\right. \\
      &\left.\phantom{- \int F(t,q) \sum_{i=1}^{n-1}} +
        \frac{H[\rho_*,\rho_0] \ell(t,d\rho_*)
          f(t,\rho_*,d\rho_0)}{\ell(t,d\rho_0)} \right)
      \mu_{n-1}(t,dq) \\
      & \left. \qquad \qquad \qquad \phantom{\sum_{i=1}^{n-1}} - \int
        F(t,q) \int H[\rho_n,\rho_+] f(t,\rho_n,d\rho_+)
        \mu_n(t,dq)\right]
    \end{aligned}
  \end{equation}
  for positive integers $n$.  In the case $n = 0$, we have $\phi_s^t q
  = q$, and the approximation is
  \begin{equation}
    \label{eqn:approx-diff-0}
    \int \frac{F(t,q) - F(s,q)}{t-s} \mu_0(t,dq) \approx \int
    [F(t,\epsilon_{\rho_*} q) - F(t,q)] H[\rho_0,\rho_*]
    \mu_0(t,dq).
  \end{equation}
  We observe that the bracketed portion of \eqref{eqn:approx-diff-n}
  nearly matches \eqref{eqn:Lstarmu-short}, except that part involves
  $\mu_{n-1}$ and part involves $\mu_n$.  Furthermore, we have
  \begin{equation}
    \label{eqn:remove-tau}
    \begin{aligned}
      &\left|\int \bbE F(t, \epsilon_{\rho_+} \phi_s^\tau q) H[\rho_n,\rho_+]
        f(t,\rho_n,d\rho_+) \mu_n(t,dq) \right. \\
      &\qquad \left. - \int F(t, \epsilon_{\rho_+} q) H[\rho_n,\rho_+]
        f(t,\rho_n,d\rho_+) \mu_n(t,dq) \right| \leq C_{H'(P),n,L} (t-s).
    \end{aligned}
  \end{equation}
  This follows since $\tau$ is conditionally independent of $q$ given
  $\rho_n$, and $\tau \in (s,t)$ a.s.\ so that all but $O(t-s)$ volume
  of the $x$-simplex is simply translated by $\phi_s^\tau$ to a region
  of identical volume.  We make the replacement indicated by
  \eqref{eqn:remove-tau} in \eqref{eqn:approx-diff-n} without changing
  the form of the error.

  For any positive integer $N$ we define 
  \begin{equation}
    \Gamma_N(t) = \int F(t,q) \sum_{n=0}^N \mu_n(t,dq).
  \end{equation}
  Summing \eqref{eqn:approx-diff-n}, \eqref{eqn:approx-diff-0}, and
  our approximation for $\mu_n(t,\cdot) - \mu_n(s,\cdot)$ from
  \eqref{eqn:Lstarmu-short} gives
  \begin{equation}
    \begin{aligned}
      &\frac{\Gamma_N(t) - \Gamma_N(s)}{t-s} \\
      &\approx \int F(t,q) \left(\sum_{i=1}^N
        \frac{(H[\rho_*,\rho_i] - H[\rho_{i-1},\rho_*])
          f(t,\rho_{i-1},d\rho_*)
          f(t,\rho_*,d\rho_i)}{f(t,\rho_{i-1},d\rho_i)}\right. \\ 
      &\left.\phantom{- \int F(t,q) \sum_{i=1}^N} +
        \frac{H[\rho_*,\rho_0] \ell(t,d\rho_*)
          f(t,\rho_*,d\rho_0)}{\ell(t,d\rho_0)} \right)
      \mu_N(t,dq) \\
      &\quad - \int F(t,\epsilon_{\rho_+} q) H[\rho_N,\rho_+]
      f(t,\rho_N,d\rho_+) \mu_N(t,dq)
    \end{aligned}
  \end{equation}
  with an $o(t-s)$ error.  Call the right side $\gamma_N(t)$, so that
  the left derivative of $\Gamma_N$ is $\gamma_N$. Since $\Gamma_N$ is
  a Lipschitz function, we deduce
  \begin{equation}
    \Gamma_N(T) - \Gamma_N(0) = \int_0^T \gamma_N(t) \, dt.
  \end{equation}
  We have $\gamma_N(t)$ bounded uniformly in $t$ by
  \begin{equation}
    |\gamma_N(t)| \leq \frac{3H'(P)L^N \lambda^{N+1}}{(N-1)!},
  \end{equation}
  and thus $\gamma_N \to 0$ uniformly in $t$.  It follows that
  \begin{equation}
    \Gamma_N(T) - \Gamma_N(0) \to 0
  \end{equation}
  as $N \to \infty$.  Recognizing the limits of $\Gamma_N(T)$ and
  $\Gamma_N(0)$ as 
  \begin{equation}
    \int G(q) \mu(T,dq) \quad \text{and} \quad \int \bbE G(\Phi_0^T q)
    \mu(0,dq), 
  \end{equation}
  respectively, we have verified \eqref{eqn:unchanging} and completed
  the proof.
\end{proof}

Having shown the candidate measure $\mu(t,dq)$ constructed using the
solutions $\ell(t,d\rho_0)$ and $f(t,\rho_-,d\rho_+)$ given by Theorem
\ref{thm:kinetic}, our next task is to verify the latter, which we
undertake in the next section.

 

\section{The kinetic and marginal equation}
\label{sec:kinetic}

The primary goal of the present section is to prove
Theorem~\ref{thm:kinetic} concerning the kernel $f(t,\rho_-,d\rho_+)$
and marginal $\ell(t,d\rho_0)$, so that the candidate measure
$\mu(t,dq)$ in the previous section is well-defined and has the
properties required for the argument there.  

While we introduce our notation, we also discuss the intuitive meaning
of the terms of our kinetic equation, comparing with that of Menon and
Srinivasan \cite{Menon10}.  Let us write $\cL^{\kappa}$ for the
operator on $\cK$ given by the right-hand side of \eqref{eqn:kinetic},
\begin{equation}
  \label{eqn:kinetic-op}
  \begin{aligned}
    (\cL^{\kappa} k)(\rho_-,d\rho_+)
    &= \int (H[\rho_*,\rho_+] - H[\rho_-,\rho_*])
    k(\rho_-,d\rho_*) k(\rho_*,d\rho_+) \\
    &\quad - \left[\int H[\rho_+,\rho_*] k(\rho_+,d\rho_*) - \int
      H[\rho_-,\rho_*] k(\rho_-,d\rho_*)\right] k(\rho_-,d\rho_+),
  \end{aligned}
\end{equation}
so that the kinetic equation is $f_t = \cL^{\kappa} f$. 

The first term, which we call the \emph{gain} term, corresponds to the
production of a shock connecting states $\rho_-$ and $\rho_+$ by means
of collision of shocks connecting states $\rho_-,\rho_*$ and
$\rho_*,\rho_+$.  Such shocks have relative velocity given by 
\begin{equation}
  H[\rho_*,\rho_+] - H[\rho_-,\rho_*] = H[\rho_-,\rho_*,\rho_+]
  (\rho_+ - \rho_-).
\end{equation}
In the Burgers case, the second divided difference
$H[\cdot,\cdot,\cdot]$ is constant, and the above is proportional to
the sum of the increment $\rho_+ - \rho_-$, analogous to mass in the
Smoluchowski equation.

The second line of \eqref{eqn:kinetic-op} we call the ``loss''
term, though this need not be of definite sign.  To better understand
this, note that when we have proven Theorem~\ref{thm:kinetic}, we will
know that $f(t,\rho_-,d\rho_+)$ has total mass which is constant in
$(t,\rho_-)$.  In this case the loss term may be rewritten
equivalently as
\begin{multline}
  \label{eqn:loss-expanded}
  \left[ \int (H[\rho_+,\rho_*] - H[\rho_-,\rho_+])
    f(t,\rho_+,d\rho_*) \right. \\
  \left. - \int (H[\rho_-,\rho_*] - H[\rho_-,\rho_+])
    f(t,\rho_-,d\rho_*) \right] f(t,\rho_-,d\rho_+),
\end{multline}
which corresponds precisely with the kinetic equation of
\cite{Menon10}.  The meaning of the first line of
\eqref{eqn:loss-expanded} is clear: we lose a shock connecting states
$\rho_-,\rho_+$ when a shock connecting $\rho_+,\rho_*$ collides with
this, and the relative velocity is precisely $H[\rho_+,\rho_*] -
H[\rho_-,\rho_+]$.

The second line of \eqref{eqn:loss-expanded} is less easily
understood.  One would expect to find here a loss related to a shock
connecting $\rho_-,\rho_+$ colliding with $\rho_*,\rho_-$ for $\rho_*
< \rho_-$, particularly if we were viewing $f$ as a \emph{jump
  density} as \cite{Menon10} views its corresponding $n(t,y,dz)$.
Viewed as a \emph{rate kernel}, it is not clear that $f$ should suffer
such a loss: in some sense we must condition on being in state
$\rho_-$ for $f(t,\rho_-,\cdot)$ to be relevant at all.  At this point
we cannot offer an intuitive kinetic reason for the second line of
\eqref{eqn:loss-expanded}, but in light of the rigorous results of
this article we can be assured that this is correct for our model.

\begin{remark}
  The form \eqref{eqn:loss-expanded} seems preferable in the more
  generic setting, since---as pointed out by the referee---the
  resulting equation $f_t = \cL^\kappa f$ has the property that 
  \begin{equation}
    \lambda(t,\rho_-) = \int f(t,\rho_-,d\rho_+)
  \end{equation}
  is (formally) conserved in time for each $\rho_-$, \emph{without
    assuming that $\lambda(0,\rho_-)$ is constant.}  This observation
  will be important in attempts to generalize the results of the
  present paper.
\end{remark}

We return to the task at hand, showing existence and uniqueness of
$f(t,\rho_-,d\rho_+)$.  The argument proceeds through an approximation
scheme for $\exp(\lambda H'(P) t) f(t,\cdot,\cdot)$, where we can
easily maintain positivity.  To avoid endowing $\cK$ with a weak
topology, we show directly the approximations are Cauchy, rather than
appealing to Arzela-Ascoli.

\begin{proof}[Proof of Theorem~\ref{thm:kinetic}, part I]
  Write $c = \lambda H'(P)$ and consider for each positive integer $n$
  the continuous paths $h^n : [0,\infty) \to \cK$ defined by $h^n(0) =
  g$ for all $n$ and 
  \begin{equation}
    \dot{h}^n(t) = e^{-cj/n} (\cL^{\kappa} h^n)(j/n) + c
    h^n(j/n), \qquad t \in (j/n, (j+1)/n).
  \end{equation}
  We claim the following properties of $h^n$: 
  \begin{itemize}
  \item[(a)] $h^n(t) \geq 0$ for all $t \geq 0$,
  \item[(b)] for each $t \geq 0$ the total integral $\int
    h^n(t,\rho_-,d\rho_+)$ is constant in $\rho_- \in [0,P)$,
  \item[(c)] for all $t \geq 0$ 
    \begin{equation}
      \label{eqn:hn-bound}
      \|h^n(t)\| \leq \lambda (1+c/n)^{\lceil nt \rceil} \leq
      \lambda e^{c(t+1)} =: M(t),
    \end{equation}
    and
  \item[(d)] $h^n(t,\rho_-,[0,\rho_-] \cup \{P\}) = 0$ for all $t$
    and all $\rho_- \in [0,P)$.
  \end{itemize}
  Since $h^n$ is piecewise-linear in $t$ and the properties above are
  preserved by convex combinations, it suffices to verify this at $t =
  j/n$ for all integers $j \geq 0$.  

  We proceed by induction.  Abbreviate $h^n_j = h^n( j/n)$.  The
  $j = 0$ case holds by the hypotheses on $g$.  Assume now that the
  claim holds for $j$, and consider $j+1$.  We have
  \begin{equation}
    \label{eqn:iter-change}
    \begin{aligned}
      n (h^n_{j+1} - h^n_j) 
      &= e^{-cj/n} \left[\int (H[\rho_*,\rho_+] - H[\rho_-,\rho_*])
        h^n_j(\rho_-,d\rho_*) h^n_j(\rho_*,d\rho_+) \right. \\
      &\qquad \qquad \qquad + \left(\int H[\rho_-,\rho_*]
        h^n_j(\rho_-,d\rho_*)\right) h^n_j(\rho_-,d\rho_+) \\
      &\qquad \qquad \qquad \left. - \left(\int H[\rho_+,\rho_*]
        h^n_j(\rho_+,d\rho_*)\right) h^n_j(\rho_-,d\rho_+)\right] \\
      &\qquad + c h^n_j(\rho_-,d\rho_+).
    \end{aligned}
  \end{equation}
  In particular, for any $\rho_+$ we have
  \begin{equation}
    \begin{aligned}
      c - e^{-cj/n} \int H[\rho_+,\rho_*] h^n_j(\rho_+,d\rho_*)
      &\geq c - e^{-cj/n} H'(P) \|h^n_j\| \\
      &\geq c - e^{-cj/n} H'(P) \lambda (1+c/n)^j \\
      &= c\left[1 - e^{-cj/n}(1+c/n)^j\right] \geq 0,
    \end{aligned}
  \end{equation}
  using property (c) for case $j$; thus $n(h^n_{j+1} - h^n_j) \geq 0$.
  This and (a) $h^n_j \geq 0$ give $h^n_{j+1} \geq 0$, property (a)
  for case $j+1$.  Furthermore, the total integral of the right-hand
  side of \eqref{eqn:iter-change} is exactly
  \begin{equation}
    c \int h^n_j(\rho_-,d\rho_+) = c \|h^n_j\|,
  \end{equation}
  since (b) implies the bracketed portion integrates to $0$.  Also
  using (b) for case $j$, the change in the total integral from
  $h^n_j$ to $h^n_{j+1}$ is independent of $\rho_-$, verifying (b) for
  case $j+1$.  Using (c) for case $j$ we find 
  \begin{equation}
    \|h^n_{j+1}\| \leq \|h^n_j\| +  c n^{-1}\|h^n_j\| = (1 + c/n)
    \|h^n_j\| \leq \lambda (1 +  c/n)^j,
  \end{equation}
  so that (c) holds for case $j+1$.  Finally, we observe that property
  (d) for $h^n_j$ implies 
  \begin{equation}
    \cL^\kappa h^n_j(\rho_-,[0,\rho_-] \cup \{P\}) = 0,
  \end{equation}
  and thus (d) holds for case $j+1$.

  We now consider the matter of convergence.  Pairing $\cL^{\kappa}$
  with any $k_1,k_2 \in \cK$ and measurable $J(\rho_+)$ with $|J| \leq
  1$, we find that
  \begin{equation}
    \label{eqn:kinetic-diff}
    \|\cL^{\kappa} k_1 - \cL^{\kappa} k_2\| \leq 3 H'(P) (\|k_1\| +
    \|k_2\|) \|k_1 - k_2\|. 
  \end{equation}
  In particular $\cL^\kappa$ is Lipschitz on bounded sets in $\cK$.
  For brevity write 
  \begin{equation}
    \cL^h k(s,\rho_-,d\rho_+) = e^{-cs} \cL^\kappa k(\rho_-,d\rho_+) +
    c k(\rho_-,d\rho_+).
  \end{equation}
  We compute for any $m<n$ and $t \geq 0$
  \begin{multline}
    \label{eqn:show-hn-cauchy}
    \|h^{m}(t) - h^n(t)\|
    \leq \int_0^t \|\dot{h}^{m}(s) - \cL^h h^{m}(s)\| + \|\cL^h
    h^{m}(s) - \cL^h h^n(s)\| \\
    + \|\cL^h h^n(s) - \dot{h}^n(s)\| \, ds.
  \end{multline}
  Observe that 
  \begin{equation}
    \begin{aligned}
      \|\dot{h}^n(s) - \cL^h h^n(s)\|
      &= \|\cL^h h^n(n^{-1} \lfloor n s \rfloor) - \cL^h h^n(s)\| \\
      &\leq \|e^{-cn^{-1} \lfloor n s \rfloor} \cL^\kappa h^n(n^{-1}
      \lfloor n s \rfloor) - e^{-cs} \cL^\kappa h^n(s)\| \\
      &\qquad + c\|h^n(n^{-1} \lfloor n s \rfloor) - h^n(s)\|
    \end{aligned}
  \end{equation}
  where 
  \begin{equation}
    \begin{aligned}
      \|e^{-cn^{-1} \lfloor n s \rfloor} & \cL^\kappa h^n(n^{-1}
      \lfloor n s \rfloor) - e^{-cs} \cL^\kappa h^n(s)\| \\
      &\leq e^{-cs} \|\cL^\kappa h^n(n^{-1} \lfloor n s \rfloor) -
      \cL^\kappa h^n(s)\| \\
      &\qquad + (e^{-cn^{-1} \lfloor n s \rfloor} - e^{-cs})
      \|\cL^\kappa h^n(n^{-1} \lfloor n s \rfloor)\| \\
      &\leq e^{-cs} 6 H'(P) M(s) \|h^n(n^{-1} \lfloor n s \rfloor) -
      h^n(s)\| \\
      &\qquad + e^{-cs} e^{c n^{-1}} n^{-1} 3 H'(P) M(s)^2 \\
      &\leq 6 c e^c \|h^n(n^{-1} \lfloor n s \rfloor) -  h^n(s)\| +
      3 c \lambda e^{cs + 3c}n^{-1},
    \end{aligned}
  \end{equation}
  We have also 
  \begin{equation}
    \begin{aligned}
      \|h^n(n^{-1} \lfloor n s \rfloor) - h^n(s)\|
      &\leq n^{-1} \|\cL^h h^n(n^{-1} \lfloor n s \rfloor)\| \\
      &\leq n^{-1} (e^{-cs} 3 H'(P) M(s)^2 + M(s)) \\
      &= n^{-1} (3 \lambda c e^{cs + 2c} + \lambda e^{c(s+1)}).
    \end{aligned}
  \end{equation}
  Combining these, there is a constant $C_1$ so that
  \begin{equation}
    \|\dot{h}^n(s) - \cL^h h^n(s)\| \leq C_1 n^{-1} e^{cs}
  \end{equation}
  for all positive integers $n$ and times $s \geq 0$.

  The remaining term in the integrand of \eqref{eqn:show-hn-cauchy} is
  \begin{equation}
    \begin{aligned}
      \|\cL^h h^{m}(s) - \cL^h h^n(s)\|
      &\leq e^{-cs} \|\cL^\kappa h^{m}(s) - \cL^\kappa h^n(s)\| + c
      \|h^{m}(s) - h^n(s)\| \\
      &\leq (e^{-cs} 6H'(P) M(s) + c) \|h^{m}(s) - h^n(s)\| \\
      &= (6 e^c + c) \|h^{m}(s) - h^n(s)\| = C_2 \|h^{m}(s) -
      h^n(s)\|.
    \end{aligned}
  \end{equation}
  All together,
  \begin{equation}
    \|h^{m}(t) - h^n(t)\| \leq \int_0^t C_2 \|h^m(s) - h^n(s)\|
    + C_1 (n^{-1} + m^{-1}) e^{cs} \, ds,
  \end{equation}
  and by Gronwall 
  \begin{equation}
    \|h^{m}(t) - h^n(t)\| \leq C_1 (n^{-1} + m^{-1}) e^{C_2 t} \frac{e^{ct}
      - 1}{c}.
  \end{equation}
  From this we see that $h^n$ is Cauchy,
  hence convergent to a continuous $h : [0,\infty) \to \cK$ satisfying
  \begin{equation}
    \label{eqn:h-solves}
    h(t) = g + \int_0^t [e^{-cs} (\cL^\kappa h)(s) + ch(s)] \, ds
  \end{equation}
  for all $t \geq 0$, having properties (a,b,d) above and
  $\|h(t)\| \leq M(t)$.

  We define $f(t) = e^{-ct} h(t)$, finding that 
  \begin{itemize}
  \item $t \mapsto f(t) \in \cK_+$ is continuous;
  \item $t \mapsto \cL^\kappa f(t) \in \cK$ is continuous (using the
    local Lipschitz property of $\cL^\kappa$), and hence
    Bochner-integrable; and
  \item $\dot{f} = \cL^\kappa f$ with $f(0) = g$ by Leibniz.
  \end{itemize}
  Using the local Lipschitz property of $\cL^\kappa$, the solution is
  unique.  Properties (a,b,d) above hold for $f$, and (b) in
  particular gives for each fixed $\rho_-$
  \begin{equation}
    \frac{d}{dt} \int f(t,\rho_-,d\rho_+) = \int (\cL^\kappa
    f)(t,\rho_-,d\rho_+) = 0.
  \end{equation}
  Thus $\int f(t,\rho_-,d\rho_+) = \lambda$ for all $t \geq 0$, $0
  \leq \rho_- < P$.
\end{proof}

We now turn to $\ell(t,d\rho_0)$, which is intended to serve as the
marginal at $x = 0$ for our solution $\rho(x,t)$ for fixed $t > 0$.
We define a time-dependent family of operators $\cL^0$ acting on
measures $\nu(d\rho_0)$ in $\cM$ by
\begin{multline}
  \label{eqn:marginal-forward}
  (\cL^0 \nu)(t, d\rho_0) = \int H[\rho_*,\rho_0] \nu(d\rho_*)
  f(t,\rho_*,d\rho_0) \\
  - \left[\int H[\rho_0,\rho_*] f(t,\rho_0,d\rho_*)\right]
  \nu(d\rho_0). 
\end{multline}
Again the integration is over $\rho_*$ only, and for each $t$ we have
$(\cL^0 \nu)(t,\cdot) \in \cM$.  Our evolution equation for
$\ell(t,d\rho_0)$ is the linear, time-inhomogeneous $\ell_t = \cL^0
\ell$.

\begin{proof}[Proof of Theorem~\ref{thm:kinetic}, part II]
  Note that \eqref{eqn:marginal-forward} is exactly the forward
  equation for a pure-jump Markov process evolving according to a
  time-varying rate kernel 
  \begin{equation}
    H[\rho_0,\rho_+] f(t,\rho_0,d\rho_+),
  \end{equation}
  so we could obtain existence and uniqueness of solutions along these
  lines.  We outline an argument similar to that employed for $f$, for
  the sake of completeness.

  Define for each $n$ the continuous path $h^n : [0,\infty) \to \cM$
  with $h^n(0) = \delta_0$ and
  \begin{equation}
    \dot{h}^n(t) = e^{-cj/n} \cL^0 h^n(j/n) + c h^n(j/n), \qquad t \in
    (j/n,(j+1)/n).
  \end{equation}
  Proceeding as in the proof for $f$ we verify that for each $j$ the
  $h^n_j = h^n(j/n)$ are nonnegative and, since $\cL^0 h^n(j/n)$ has
  total integral zero, $\|h^n_j\| \leq (1+c/n)^{\lceil n t \rceil}$.
  Observing that $\cL^0$ is linear and bounded uniformly over $t$,
  \begin{equation}
    \label{eqn:Lzero-bounded}
    \sup_t \|(\cL^0 \nu)(t,\cdot)\|_{\TV} \leq 2 \lambda H'(P)
    \|\nu\|_{\TV}, 
  \end{equation}
  we easily show $h^n$ is Cauchy on bounded time intervals, with limit
  $h : [0,\infty) \to \cM_+$ satisfying
  \begin{equation}
    h(t) = \delta_0 + \int_0^t [e^{-cs} (\cL^0 h)(s) + ch(s)] \, ds.
  \end{equation}
  Take $\ell(t) = e^{-ct} h(t)$ to find that $\ell(t) \geq 0 $ solves
  \begin{equation}
    \ell(t) = \delta_0 + \int_0^t (\cL^0 \ell)(s) \, ds.
  \end{equation}
  Uniqueness follows using \eqref{eqn:Lzero-bounded}.  Finally, since
  $\cL^0 \nu$ as zero total integral for any $\nu$, we find that $\int
  \ell(t,d\rho_0) = 1$ for all $t$.
\end{proof}

We close this section with a remark concerning the kinetic equation
$f_t = \cL^\kappa f$ without the assumption that the initial kernel
$g$ has bounded support $[0,P]$.  When the initial condition $\xi(x)$
is unbounded, growing for example linearly in the case of quadratic
$H$, the solution to the scalar conservation law $\rho_t = H(\rho_x)$
on the semi-infinite domain should blow up in finite time.  We
likewise expect that the solution $f$ to the kinetic equation will
blow up, but control of certain moments prior to this blow up may
allow us to run the argument of Theorem~\ref{thm:bounded-sys} with
only superficial changes.  At the moment we lack the sort of estimates
one obtains in the Smoluchowski case (e.g.\ coming from a closed
equation for the second moment), but we hope to revisit this in future
work.



\section{Conclusion}
\label{sec:conclusion}

We review what has been accomplished: using an exact propagation of
chaos calculation for a bounded system with a suitably selected random
boundary condition, we have derived a complete description of the law
of the solution $\rho(x,t)$ to the scalar conservation law $\rho_t =
H(\rho)_x$ for $x,t > 0$ when $\rho(x,0) = \xi(x)$ is a bounded,
monotone, pure-jump Markov process with constant jump rate.  Notably,
we have recovered the Markov property of the solution and a
statistical description of the shocks \emph{simultaneously}.  This may
be regarded as both a strength of our present analysis and a
shortcoming of our present understanding (a soft argument for the
preservation of the Markov property in the particle system would be
illuminating).

We emphasize how our approach using random dynamics on a bounded
interval has made things considerably easier: by constructing a
bounded system which has \emph{exactly} the right law, we are relieved
of the burden of determining precisely how wrong the law would be with
deterministic boundary.  

Our result lends additional support to the conjecture of Menon and
Srinivasan \cite{Menon10} and we hope that the sticky particle methods
described herein will provide another approach, quite different from
that of Bertoin \cite{Bertoin98}, which might be adapted to resolve
the full conjecture.  One of the authors is attempting a similar
particle approach in the non-monotone setting, and hopes to report on
this in future work.

\subsection*{Acknowledgments}

The authors thank the anonymous referee for suggestions improving our
presentation and for pointing out a recent reference.



\appendix

\section{Proofs of lemmas}
\label{app:proofs}

\begin{proof}[Proof of Lemma~\ref{lem:tcontinuity}]
  Boundedness of $G$ is obvious from its definition as a Laplace
  functional.  For continuity, we begin by considering $G(\phi_0^t q)$
  with $q$ fixed.  As $t$ varies, $\phi_0^t q$ varies continuously,
  with $\rho$-values unchanging and $x$-values changing with rate
  bounded by $H'(P)$, \emph{except} for finitely many times when
  collisions occur.  Note $G(\phi_0^t q)$ depends on $\phi_0^t q$ via
  $\pi(\phi_0^t q, \cdot)$, and the former varies continuously across
  these collision times.  More concretely, because $J \in C([0,L])$,
  $J$ is uniformly continuous, and so
  \begin{equation}
    w_J(\delta) = \sup \{|J(x) - J(y)| : x,y \in [0,L], |x - y| \leq
    \delta\}
  \end{equation}
  has the property that $w_J(\delta) \to 0$ as $\delta \to 0+$.
  Using the above,
  \begin{equation}
    |\log G(\phi_0^t q) - \log G(q)| \leq \sum_{i=1}^n (\rho_i -
    \rho_{i-1}) w_J(t H'(P)) \leq P w(t H'(P)).
  \end{equation}
  Since the exponential is uniformly continuous for nonpositive
  arguments, we find that $G(\phi_0^t q)$ is continuous at $t = 0$
  uniformly in $q$.  Write 
  \begin{equation}
    w_G(\delta) = \sup \{|G(\phi_0^t q) - G(q)| : q \in Q, 0 \leq t \leq
    \delta\},
  \end{equation}
  and observe that $w_G(\delta) \to 0$ and $\delta \to 0+$.

  Suppose now that $0 \leq s < t \leq T$; we compare $\bbE G(\Phi_s^T
  q)$ and $\bbE G(\phi_t^T q)$.  Write $\theta = t - s$.  We have
  $G(\Phi_s^T q) = G(\Phi_{T-\theta}^T \Phi_s^{T-\theta} q)$ a.s., coupling
  the random entry processes on matching intervals.  Then 
  \begin{equation}
    \bbE |G(\Phi_{T-\theta}^T \Phi_s^{T-\theta} q) -
    G(\phi_{T-\theta}^T \Phi_s^{T-\theta} q)| \leq C_{\lambda,H'(P)} \theta,
  \end{equation}
  because the random entries occur at a rate bounded by $\lambda H'(P)$
  and $|G| \leq 1$.  Using the time homogeneity and continuity for the
  deterministic flow,
  \begin{equation}
    \bbE |G(\phi_{T-\theta}^T \Phi_s^{T-\theta} q) -
    G(\Phi_s^{T-\theta} q)| \leq w_G(\theta).
  \end{equation}
  So, at the cost of an error bounded by $C_{\lambda,H'(P)} \theta +
  w_G(\theta)$, we compare $\bbE G(\Phi_t^T q)$ with $\bbE
  G(\Phi_s^{T-\theta} q)$ instead.  Now the configuration $q$ is
  flowed on time intervals of equal length, but with random entry
  rates that have been shifted in time.

  The random entry rate is given by $H[\rho_n,\rho_+] f(r, \rho_n,
  d\rho_+)$ and $f$ is TV-continuous in $r$; we have
  \begin{equation}
    \begin{aligned}
      \|f(r, \rho_n, d\rho_+) - f(r-\theta, \rho_n, d\rho_+)\| &=
      \left\|\int_{r-\theta}^r (\cL^{\kappa} f)(\tau, \rho_n, d\rho_+)
        \, d\tau \right\| \\
      &\leq C_{\lambda, H'(P)} \theta.
    \end{aligned}
  \end{equation}
  Let us define three kernels for $r \in [0,T-t]$ according to
  \begin{equation}
    \begin{aligned}
      \hat{f}(r, \rho_-, d\rho_+) &= f(s + r, \rho_-, d\rho_+) \wedge
      f(t + r, \rho_-, d\rho_+) \\
      f^s(r, \rho_-, d\rho_+) &= f(s + r, \rho_-, d\rho_+) -
      \hat{f}(r, \rho_-, d\rho_+) \\
      f^t(r, \rho_-, d\rho_+) &= f(t + r, \rho_-, d\rho_+) -
      \hat{f}(r, \rho_-, d\rho_+),
    \end{aligned}
  \end{equation}
  where the minimum ($\wedge$) of two measures is defined as usual by
  choosing a third measure with which the former two are absolutely
  continuous, and taking the pointwise minimum of their Radon-Nikodym
  derivatives.  That this extends measurably to the parametric case
  (i.e.~involving kernels) is immediate from a parametric version of
  the Radon-Nikodym theorem \cite[Theorem 2.3]{Novikov05}.

  Using the above we construct a coupled random entry process for
  times $r \in [0,T-t]$, namely let $(\zeta^s,\zeta^t)(r)$ be the pure-jump
  Markov process started at $(\rho_n,\rho_n)$ from $q$ and evolving
  according to the generator $\cL^{z}$ acting on bounded measurable
  functions $J(y,z)$ defined on $[0,P]^2$ according to
  \begin{equation}
    \label{eqn:coupledentry}
    \begin{aligned}
      (\cL^{z} J)(r,y,z) = \quad &\bOne(y = z) \int
      \{J(\rho_+,\rho_+) - J(y,z)\} H[y,\rho_+] \hat{f}(r,y,d\rho_+) \\
      \quad + &\bOne(y = z) \int 
      \{J(\rho_+,z) - J(y,z)\} H[y,\rho_+] f^s(r,y,d\rho_+)\\
      \quad + &\bOne(y = z) \int 
      \{J(y,\rho_+) - J(y,z)\} H[z,\rho_+] f^t(r,z,d\rho_+)\\
      \quad + &\bOne(y \neq z) \int 
       \{J(\rho_+,z) - J(y,z)\} H[y,\rho_+] (\hat{f} + f^s)(r,y,d\rho_+)\\
      \quad + &\bOne(y \neq z) \int
       \{J(y,\rho_+) - J(y,z)\} H[z,\rho_+] (\hat{f} + f^t)(r,z,d\rho_+).
    \end{aligned}
  \end{equation}
  Taking $J(y,z)$ which does not depend on $z$ we find that $\zeta^s(r)$
  for $r \in [0,T-t]$ has the same law as the random boundary for
  $\Phi_s^{T-\theta} q$, and likewise taking $J$ which does not depend
  on $y$ we find the $\zeta^t(r)$ has the same law as the random boundary
  for $\Phi_t^T q$, verifying the coupling.  

  On the diagonal $y = z$, the rate at which $\cL^{z}$ causes jumps
  that leave the diagonal (the second and third lines of
  \eqref{eqn:coupledentry}) is bounded by
  \begin{equation}
    C_{\lambda,H'(P)} \theta,
  \end{equation}
  and the probability that such a transition never occurs in a time
  interval of length $T - t$ is bounded below by
  \begin{equation}
    \exp[-C_{\lambda,H'(P)} (T-t) \theta] \geq
    \exp[-C_{\lambda,H'(P)} T \theta]. 
  \end{equation}
  So, coupling the random entry dynamics, we find that
  $\Phi_s^{T-\theta} q = \Phi_t^T q$ with probability at least
  $\exp[-C_{\lambda,H'(P)} T \theta]$.  Putting the above pieces
  together, we find
  \begin{multline}
    |\bbE G(\Phi_s^T q) - \bbE G(\Phi_t^T q)| \\
    \leq C_{\lambda,H'(P)}(t-s) + w_G(t-s) + (1 -
    \exp(C_{\lambda,H'(P)} T (t-s))),
  \end{multline}
  and the proof is complete.
\end{proof}

\begin{proof}[Proof of Lemma~\ref{lem:separate-random}]
  Using the Markov property of the random flow $\Phi$, we have a
  functional identity:
  \begin{equation}
    F(s,q) = \bbE G(\Phi_t^T \Phi_s^t q) = \bbE F(t, \Phi_s^t q).
  \end{equation}
  Let $q$ be fixed, and consider the following events, whose union is
  of full measure for computing the final expectation above:
  \begin{equation}
    \begin{aligned}
      \cE_0 &= \{\text{no entry at $x = L$ in $(s,t)$}\} \\
      \cE_1 &= \{\text{at least one entry at $x = L$ in $(s,t)$}\}
    \end{aligned}
  \end{equation}
  Observe that on $\cE_0$ we see only the deterministic flow $\phi$ over
  the time interval $(s,t)$:
  \begin{equation}
    F(t,\Phi_s^t q) \bOne_{\cE_0} = F(t,\phi_s^t q) \bOne_{\cE_0},
  \end{equation}
  and this occurs with probability
  \begin{equation}
    \bbP(\cE_0) = \exp\left(-\int_s^t \int H[\rho_n, \rho_+] f(r,
      \rho_n; d\rho_+) \, dr \right).
  \end{equation}
  We prefer an expression evaluating $f$ at only a single time, and
  Taylor expand the exponential around zero.
  \begin{multline}
    \label{eqn:homogeneousrate}
    \left|\int_s^t \int H[\rho_n,\rho_+] f(r, \rho_n, d\rho_+) \, dr
      - (t-s) \int  H[\rho_n,\rho_+] f(t, \rho_n, d\rho_+)\right| \\
    = \left|\int_s^t \int_t^r \int H[\rho_n,\rho_+] (\cL^{\kappa}
      f)(r', \rho_n, d\rho_+) \, dr' \, dr \right| \leq C_{\lambda,
      H'(P)} (t-s)^2,
  \end{multline}
  so that 
  \begin{equation}
    \bbP(\cE_0) = 1 - (t-s) \int H[\rho_n,\rho_+] f(t, \rho_n, d\rho_+) 
    + O((t-s)^2)
  \end{equation}
  and by exhaustion
  \begin{equation}
    \bbP(\cE_1) = (t-s) \int H[\rho_n,\rho_+] f(t, \rho_n, d\rho_+)  +
    O((t-s)^2),
  \end{equation}
  with both errors bounded uniformly over $Q$.  On $\cE_1$ write $\tau$
  for the first time a random entry occurs for $\Phi_s^t$, and
  $\rho_+$ for the new boundary value, noting that the distribution of
  $\tau$ depends only on $q$ through $\rho_n$ and that the law of
  $\rho_+$ is determined by $f(\tau,\rho_n, \cdot)$.  We have $\tau
  \in (s,t)$ so that
  \begin{equation}
    F(t,\Phi_s^t q) \bOne_{\cE_1} = F(t,\Phi_\tau^t \epsilon_{\rho_+}
    \phi_s^\tau q) \bOne_{\cE_1}.
  \end{equation}
  Using the strong Markov property for the
  random boundary at the stopping time $\tau$,
  \begin{equation}
    \bbE F(t,\Phi_\tau^t \epsilon_{\rho_+} \phi_s^\tau q) \bOne_{\cE_1}
    = \bbE F(\tau, \epsilon_{\rho_+} \phi_s^\tau q) \bOne_{\cE_1}.
  \end{equation}
  Since $\bbP(\cE_1) = O(t-s)$, we can afford to make $o(1)$
  modifications to this. Write $w(\delta)$ for the modulus of
  continuity of $F(t,q)$ in time, according to
  Lemma~\ref{lem:tcontinuity}.  Now $\tau \in (s,t)$ a.s.\ on $\cE_1$,
  so
  \begin{equation}
    |\bbE F(\tau, \epsilon_{\rho_+} \phi_s^\tau q) \bOne_{\cE_1} -
    \bbE F(t, \epsilon_{\rho_+} \phi_s^\tau q) \bOne_{\cE_1}| \leq
    \bbP(\cE_1) w(t-s). 
  \end{equation}

  Next we modify the distribution from which $\rho_+$ is selected; at
  present, $\rho_+$ is selected according to the random measure
  \begin{equation}
    \label{eqn:law-rhoplus}
    \frac{ H[\rho_n,\rho_+] f(\tau,\rho_n, d\rho_+) }{\int
      H[\rho_n,\rho_+] f(\tau,\rho_n, d\rho_+) }.
  \end{equation}
  Since $\tau \in (s,t)$ a.s.\ on $\cE_1$, the total variation difference 
  \begin{equation}
    \|f(\tau,\rho_n, d\rho_+) - f(t,\rho_n, d\rho_+)\| \leq
    C_{\lambda,H'(P)} (t-s) \quad \text{a.s.}
  \end{equation}
  Let us write $\hat{\rho}_+$ for an independent random variable
  distributed as 
  \begin{equation}
  H[\rho_n,\rho_+] f(t,\rho_n, d\rho_+),
  \end{equation}
  (note $t$  instead of $\tau$), normalized to have unit mass.  Then
  \begin{equation}
    \left|\bbE F(t,\epsilon_{\rho_+} \phi_s^\tau q) \bOne_{\cE_1} - \bbE
      F(t, \epsilon_{\hat{\rho}_+} \phi_s^\tau q) \bOne_{\cE_1} \right| 
    \leq C_{\lambda,H'(P)} (t-s) \bbP(\cE_1).
  \end{equation}
  Note that $\bbP(\cE_1)$ cancels with the normalization in
  \eqref{eqn:law-rhoplus}, up to an $O(t-s)$ error, and that
  $\hat{\rho}_+$ is independent of $\cE_1$.  With error bounded
  uniformly over $q \in Q$ by $C_{\lambda,H'(P)}[(t-s)^2 + (t-s)
  w(t-s)]$, we have
  \begin{multline}
    \label{eqn:approxexpF}
    \bbE F(t,\Phi_s^t q) \approx F(t,\phi_s^t q) \left(1 - (t-s) \int
      H[\rho_n,\rho_+] f(t,\rho_n, d\rho_+) \right) \\
    + (t-s) \int \bbE[F(t, \epsilon_{\rho_+} \phi_s^\tau q) \mid \cE_1]
    H[\rho_n,\rho_+] f(t,\rho_n,d\rho_+),
  \end{multline}
  where the remaining expectation is now taken only over $\tau$
  conditioned on $\cE_1$, which is therefore distributed over $[s,t]$.
\end{proof}

\begin{proof}[Proof of Lemma~\ref{lem:boundary-approx}]
  On any of $A_i$, $i = 0,\ldots,n-1$, we have $\sigma = \sigma(q)
  \leq t$.  For such $q$ we have
  \begin{multline}
    \label{eqn:replace-boundary}
    |F(t,\phi_s^t q) - F(t,\phi_s^\sigma q)| \\
    \leq |F(t,\phi_\sigma^t \phi_s^\sigma q) -
    F(\sigma,\phi_s^\sigma q)| + |F(\sigma, \phi_s^\sigma q) -
    F(t,\phi_s^\sigma q)|.
  \end{multline}
  The first absolute difference is the error induced by conditioning
  on zero particle entries on the interval $[s,\sigma]$, which costs
  less than $\lambda H'(P) (t-s)$.  Lemma~\ref{lem:tcontinuity} bounds
  the second of these by $w(t-s)$.  Using \eqref{eqn:replace-boundary}
  we can replace the integrand $F(t,\phi_s^t q)$ over $A_i$ with
  $F(t,\phi_s^\sigma q)$, i.e.\ the projection of $q$ onto
  $(\partial_i \Delta^L_n) \times \overline{\Delta^P_{n+1}}$ in the
  direction of the velocity.
  
  Write $v_i = -H[\rho_{i-1},\rho_i]$ for the velocities.  We can
  integrate over $A_0$ using as a parametrization for the
  $x$-variables
  \begin{equation}
    (\theta,x_2,\ldots,x_n) \mapsto (-\theta v_1, x_2 - \theta v_2,
    \ldots, x_n - \theta v_n),
  \end{equation}
  which has absolute Jacobian determinant $-v_1 = H[\rho_0,\rho_1]$.
  Namely, 
  \begin{equation}
    \begin{aligned}
      &\int_{A_0} F(t,\phi_s^\sigma q) \mu_n(t,dq) \\
      &\quad =  \int_{\Delta^L_{n-1} \times \overline{\Delta^P_{n+1}}}
      \int_0^{(t-s) \wedge |(L-x_n)/v_n|}
      F(t,(0,x_2,\ldots,x_n;\rho_0,\ldots,\rho_n)) \\
      &\hspace{13em} H[\rho_0,\rho_1]\ d\theta \, dx_2 \cdots dx_n \,
      \ell(t,d\rho_0) \prod_{i=1}^n f(t,\rho_{i-1},d\rho_i).
    \end{aligned}
  \end{equation}
  On all but a portion of $A_0$ with $x$-volume bounded by
  $C_{n,L,H'(P)} (t-s)^2$ we have $t - s < |(L - x_n)/v_n|$, and we
  can extend the upper limit of integration to $\theta = t-s$ with
  and error bounded by this.  Noting that the configurations 
  \begin{equation}
    (0,x_2,\ldots,x_n; \rho_0,\ldots,\rho_n) \text{ and }
    (x_2,\ldots,x_n; \rho_1,\ldots,\rho_n)
  \end{equation}
  are equivalent under our sticky particle dynamics and relabeling, we
  obtain our approximation for the integral over $A_0$.

  The argument for $A_i$, $i = 1,\ldots,n-1$, is similar.  We use the
  mapping 
  \begin{equation}
    (x_1,\ldots,x_i,x_{i+2},\ldots,x_n,\theta) \mapsto
    (x_1 - \theta v_1,\ldots, x_i - \theta v_i, x_i - \theta v_{i+1},
    \ldots, x_n - \theta v_n),
  \end{equation}
  with absolute Jacobian determinant $v_i - v_{i+1} =
  H[\rho_i,\rho_{i+1}] - H[\rho_{i-1},\rho_i]$, and replace the upper
  limit of integration for $\theta$ with $t-s$ at the cost of an
  $O((t-s)^2)$ error.  Relabeling gives the indicated approximation.

  The range of 
  \begin{equation}
    \phi_s^{s+\theta}(x_1,\ldots,x_{n-1},L; \rho_0,\ldots,\rho_n)
  \end{equation}
  where $(x_1,\ldots,x_{n-1}) \in \Delta^L_{n-1}$,
  $(\rho_0,\ldots,\rho_n) \in \overline{\Delta^P_{n+1}}$, and 
  \begin{equation}
    0 \leq \theta \leq (t-s) \wedge \frac{x_1}{|v_1|} \wedge \frac{x_2
      - x_1}{|v_2 - v_1|} \wedge \cdots \wedge \frac{x_n -
      x_{n-1}}{|v_n - v_{n-1}|}
  \end{equation}
  is $B$, modulo sets of lower $x$-dimension.  Using
  Lemma~\ref{lem:tcontinuity} we can replace $F(t, q)$ by
  \begin{equation}
    F(t,(x_1,\ldots,x_{n-1},L; \rho_0,\ldots,\rho_n))
  \end{equation}
  with pointwise error over $B$ bounded by $C_{\lambda,H'(P)}[w(t-s) +
  (t-s)]$.  For all but an $O((t-s)^2)$ portion of the $x$-volume, the
  value $t-s$ is the minimum above, and as before we obtain the
  indicated approximation of the integral over $B$.
\end{proof}

\begin{proof}[Proof of Lemma~\ref{lem:diffmu}]
  Essentially we are verifying the Leibniz rule, but we are unable to
  find a version of this to cite for kernels.  We first obtain
  quantitative control over our linear approximations of $f$ and $\ell$.
  Namely, fix any measurable $|J| \leq 1$.  We have 
  \begin{multline}
    \label{eqn:linearizedf}
    \int J(\rho_+) [f(t,\rho_-,d\rho_+) - f(s,\rho_-,d\rho_+) - (t-s)
    (\cL^{\kappa} f)(t,\rho_-,d\rho_+)]  \\
    = \int_s^t J(\rho_+) [(\cL^{\kappa} f)(\tau,\rho_-,d\rho_+) -
    (\cL^\kappa f)(t,\rho_-,d\rho_+)] d\tau.
  \end{multline}
  The difference of $\cL^\kappa f$ at different times can be expressed
  in terms of $f$ and $H$ again, 
  \begin{equation}
    \begin{aligned}
      &(\cL^\kappa f)(\tau, \rho_-, d\rho_+) - (\cL^\kappa
      f)(t,\rho_-,d\rho_+) \\ 
      &= \int (H[\rho_*,\rho_+]-H[\rho_-,\rho_*]) \left(\int_t^\tau
        (\cL^\kappa f)(\tau',\rho_-,d\rho_*) d\tau'\right)
      f(\tau,\rho_*,d\rho_+) \\
      &\quad + \int (H[\rho_*,\rho_+]-H[\rho_-,\rho_*])
      f(t,\rho_-,d\rho_*) \left(\int_t^\tau (\cL^\kappa
        f)(\tau',\rho_*,d\rho_+) \, d\tau'\right) \\
      &\quad - \left[\int_t^\tau \int \left(H[\rho_+,\rho_*]
          (\cL^\kappa)(\tau',\rho_+,d\rho_*)\right. \right. \\
      &\qquad \qquad \left. \left. \phantom{\int} - H[\rho_-,\rho_*]
          (\cL^\kappa)(\tau',\rho_-,d\rho_*)\right) \, d\tau'\right]
      f(\tau,\rho_-,d\rho_*) \\
      &\quad -\left[\int H[\rho_+,\rho_*] f(\tau,\rho_+,d\rho_*)
      \right. \\
      &\qquad \qquad \left. \phantom{\int} - H[\rho_-,\rho_*]
        f(\tau,\rho_-,d\rho_*)\right] \int_t^\tau 
      (\cL^\kappa f)(\tau',\rho_-,d\rho_+)
    \end{aligned}
  \end{equation}
  Noting that $\|\cL^\kappa f\| \leq 3 \lambda^2 H'(P)$, we integrate
  over $\rho_+$, then $\rho_*$, and find that \eqref{eqn:linearizedf}
  is bounded by
  \begin{equation}
    9 H'(P)^2 \lambda^3 (t-s)^2.
  \end{equation}
  Next, for any $|J| \leq 1$,
  \begin{multline}
    \label{eqn:linearizedh}
    \int J(\rho_0) [\ell(t,d\rho_0) - \ell(s,d\rho_0) - (t-s)(\cL^0
    \ell)(t,d\rho_0)] \\
    = \int_s^t J(\rho_0) [(\cL^0 \ell)(\tau, d\rho_0) - (\cL^0
    \ell)(t, d\rho_0)] d\tau,
  \end{multline}
  and 
  \begin{equation}
    \begin{aligned}
      (\cL^0 \ell)(\tau,d\rho_0) & - (\cL^0 \ell)(t,d\rho_0) \\
      &= \int H[\rho_*,\rho_0] \left(\int_t^\tau (\cL^0
        \ell)(\tau',d\rho_*) \, d\tau'\right) f(\tau,\rho_*,d\rho_0)
      \\
      &\quad + \int H[\rho_*,\rho_0] \ell(t,d\rho_*) \left(\int_t^\tau
        (\cL^\kappa f)(\tau', \rho_*, d\rho_0) \right) \\
      &\quad - \left[\int_t^\tau \left(\int H[\rho_0,\rho_*]
          (\cL^\kappa f)(\tau',\rho_0,d\rho_*)\right) d\tau' \right]
      \ell(\tau, d\rho_0) \\
      &\quad - \left[\int H[\rho_0,\rho_*] f(t,\rho_0,d\rho_*) \right]
      \int_t^\tau (\cL^0 \ell)(\tau',d\rho_0) \, d\tau'.
    \end{aligned}
  \end{equation}
  We have $\|\cL^0 \ell\| \leq 2 \lambda H'(P)$, so by integrating over
  $\rho_0$ and then $\rho_*$ we find \eqref{eqn:linearizedh} is
  bounded by 
  \begin{equation}
    5 H'(P)^2 \lambda^2 (t-s)^2.
  \end{equation}
  Returning to the problem of establishing our Leibniz rule, note that
  $e^{-\lambda L} \bOne_{\Delta^L_n}(dx)$ factors from both $\mu_n$
  and $\cL^* \mu_n$.  It will therefore suffice to obtain a bound on
  the $(\rho_0,\ldots,\rho_n)$ portion.

  We argue by induction.  In the case $n = 0$, we have only the
  difference in \eqref{eqn:linearizedh}, and the result holds.  Now
  suppose that the result holds for case $n$, and consider $n+1$.
  Choose as a test function $J(\rho_0,\ldots,\rho_{n+1})$, which is
  measurable and has $|J| \leq 1$, and integrate it against
  \begin{equation}
    \label{eqn:Leibniz}
    \begin{aligned}
      & \ell(t,d\rho_0) \prod_{j=1}^{n+1}
        f(t,\rho_{j-1},d\rho_j) - \ell(s,d\rho_0) \prod_{i=1}^{n+1}
        f(s,\rho_{j-1},d\rho_j) \\
      & \quad = \int \ell(t,d\rho_0) \prod_{j=1}^n
      f(t,\rho_{j-1},d\rho_j)
      [f(t,\rho_n,d\rho_{n+1}) - f(s,\rho_n,d\rho_{n+1})] \\
      & \quad \quad + \left[\ell(t,d\rho_0) \prod_{j=1}^n
        f(t,\rho_{j-1},d\rho_j) - \ell(s,d\rho_0) \prod_{j=1}^n
        f(s,\rho_{j-1},d\rho_j)\right] f(s,\rho_n,d\rho_{n+1}).
    \end{aligned}
  \end{equation}
  For each $\rho_0,\ldots,\rho_n$, the function
  $J(\rho_0,\ldots,\rho_{n+1})$ is bounded and measurable in
  $\rho_{n+1}$, so $f(t,\rho_n, d\rho_{n+1}) -
  f(s,\rho_n,d\rho_{n+1})$ can be replaced with 
  \begin{equation}
    (t-s) \int J(\rho_0,\ldots,\rho_{n+1}) (\cL^\kappa f)(t,\rho_n,
    d\rho_{n+1}) 
  \end{equation}
  plus an error no larger than $9 H'(P)^2 \lambda^3 (t-s)^2$, which
  when integrated over the remaining variables grows by a factor
  $\lambda^n$.

  In the third line of \eqref{eqn:Leibniz}, noting $\int
  J(\rho_0,\ldots,\rho_{n+1}) f(s,\rho_n,d\rho_{n+1})$ is bounded and
  measurable in $(\rho_0,\ldots,\rho_n)$, we apply the inductive
  hypothesis, replacing the bracketed difference by
  \begin{equation}
    (t-s) \left[\frac{(\cL^0 \ell)(t, d\rho_0)}{\ell(t,d\rho_0)} +
      \sum_{i=1}^n \frac{(\cL^{\kappa} f)(t, \rho_{i-1},
        d\rho_i)}{f(t, \rho_{i-1}, d\rho_i)}\right] 
    \ell(t,d\rho_0) \prod_{j=1}^n f(t, \rho_{j-1}, d\rho_j)
  \end{equation}
  plus an $o(t-s)$ error.  After doing this, again noting
  $J(\rho_0,\ldots,\rho_{n+1})$ is measurable in $\rho_{n+1}$ for each
  fixed $\rho_0,\ldots,\rho_n$, we replace $f(s,\rho_n,d\rho_{n+1})$
  with $f(t,\rho_n, d\rho_{n+1})$ at a cost of $3\lambda^2
  H'(P)(t-s)$, which gets multiplied by the other factor $(t-s)$.

  Adding the modified versions of these two lines of
  \eqref{eqn:Leibniz}, we find exactly the $\rho_0,\ldots,\rho_{n+1}$
  portion of $\cL^* \mu_{n+1}$ plus an $o(t-s)$ error.
\end{proof}



\bibliographystyle{spmpsci}%
\bibliography{scl-markov}%

\end{document}